\title{On equality of  ranks of local components of automorphic representations}
\author{Mohammad Bardestani}
\address{Department of Mathematics and Statistics, University of Ottawa, 585 King Edward Ave., Ottawa, ON, Canada, K1N6N5}
\email{mohammad.bardestani@gmail.com}
\author{Hadi Salmasian}
\address{Department of Mathematics and Statistics, University of Ottawa, 585 King Edward Ave., Ottawa, ON, Canada, K1N6N5}
\email{hsalmasi@uottawa.ca}
\begin{document} 

\begin{abstract}
We prove that the local components of an automorphic representation of an adelic semisimple group have equal rank in the sense of \cite{salduke}. 
Our theorem is an analogue of the results  previously obtained by Howe \cite{HoweLowRank}, Li \cite{LiDege}, Dvorsky--Sahi \cite{DvorskySahi}, and Kobayashi--Savin \cite{KobayashiSavin}. Unlike previous works which are based on explicit matrix 
realizations  and existence of  parabolic subgroups with abelian unipotent radicals, our proof works uniformly for all of the (classical as well as exceptional) groups under consideration.
Our result is an extension  of the statement known for several semisimple groups (see \cite{GanSavin}, \cite{SalManus})
that if at least one local component of an automorphic representation is a minimal representation, then all of its local components are minimal.

\end{abstract}

\maketitle

\section{Introduction}
\label{SecIntr}
The notion of rank for a unitary representation of a semisimple group over a local field of characteristic zero is a  powerful tool for  studying \emph{singular} (also known as \emph{small}) unitary representations. 
The first such notion of rank, nowadays usually called \emph{$N$-rank}, was introduced by Roger Howe \cite{HoweRank}
 for the metaplectic group $\mathrm{Mp}_{2n}$. In a nutshell, Howe's idea is to consider orbits of the action of the Levi factor of the Siegel parabolic on its unipotent radical, and to associate them to unitary representations.  Howe used his $N$-rank to 
construct singular unitary representations \cite{HoweCortona}, to study the connection between singular representations and automorphic forms with degenerate Fourier coefficients \cite{HoweLowRank}, and to obtain explicit pointwise bounds for matrix coefficients of general irreducible unitary representations \cite{HoweRank}. 

Following Howe's work, a variety of notions of rank similar to Howe's $N$-rank were defined, e.g., by Scaramuzzi \cite{Scaramuzzi} for $\mathrm{GL}_n$,  by
Li \cite{LiSing}, \cite{LiDege} for classical groups, and  by Dvorsky and Sahi 
\cite{DvorskySahi}
for semisimple groups associated to Jordan algebras.
 The underlying idea of all of these works is similar to Howe's original method, namely to consider the restriction of a unitary representation to an abelian unipotent radical.

In \cite{salduke}, the second author defined a new notion of rank  that was applicable uniformly to nearly all  semisimple groups over local fields of characteristic zero, and  proved the  \emph{purity} theorem, which is one of the important steps in applications of all of the aforementioned notions of rank. The new idea that was introduced in \cite{salduke} was to define rank based on Kirillov's orbit method. 

In this article, our goal is to prove that an analogue of
a  result of Howe \cite{HoweLowRank}
on the rank of local components of automorphic representations of $\mathrm{Mp}_{2n}$ also 
holds in the context of the rank defined by the second author in \cite{salduke}. 

We now proceed to the statement of our main theorem.  
Let $\Places:=\{2,3,5,\ldots,\infty\}$ denote the set of places of $\Q$.
For every place $\pl\in\Places$,
we denote the corresponding completion of $\Q$ by $\Q_\pl$  (note that $\Q_\infty=\R$). 
Let $\bfG$ be an algebraic group that is defined over $\Q$.
Throughout this paper, we assume that $\bfG$ satisfies the following conditions:
\begin{itemize}
\item[(i)]  $\bfG$ is connected and absolutely almost simple.
\item[(ii)] The absolute root system of $\bfG$ is not $\mathsf{A}_1$, and its highest root is defined over $\Q$.
\item[(iii)] The weak approximation property holds for $\bfG$ 
with respect to every place $\nu\in\Places$. In other words, $\bfG(\Q)$ is dense in $\bfG(\Q_\pl)$ for every place $\pl\in\Places$. 
\end{itemize}

Let $\A:=\prod'_{\pl\in\Places}\Q_\pl$ denote the ring of adeles
(where $\prod'$ indicates the restricted product). 
  We  consider a finite topological central covering 
\begin{equation}
\label{exactseqA}
1\to F\xrightarrow{\ \mathsf{i}\ } G_\A
\xrightarrow{\ \mathsf{p}\ }
 \bfG(\A)\to 1,
\end{equation}
which is split over  $\bfG(\Q)$. We fix a splitting section for 
$\bfG(\Q)$, and we
denote the image of $\bfG(\Q)$ under this section  by  $G_\Q\subseteq G_\A$. 
Now let $(\pi,\ccH)$
be an irreducible unitary representation  of $G_\A$. For every place $\pl\in\{2,3,5,\ldots,\infty\}$ of $\Q$, we denote the local component of
$\pi$ at place $\pl$ by  $\pi_\pl$
(for a precise definition of local components for covering groups, see
Remark \ref{rmk-loccpnt}).
Then $\pi_\pl$ is an irreducible unitary representation of a finite topological central extension $G_{\pl}$ of $\bfG(\Q_\pl)$.
Let $\mathrm{rk}(\pi_\pl)$ denote the rank of $\pi_\pl$ according to \cite{salduke} (see Definition \ref{dfnnurkk} below). Our main theorem is the following (see Theorem \ref{THMMaIn} for a restatement and proof).
\begin{theorem*}
 Let $G_\A$ be a finite topological central extension of $\bfG(\A)$, with $\bfG$ as above. Assume that $G_\A$ splits over 
$\bfG(\Q)$, and let $G_\Q\sseq G_\A$ be the subgroup defined 
above.
 Let $(\pi,\ccH)$ be an irreducible unitary representation of $G_\A$ which occurs as a subrepresentation of $L^2(G_\Q\bls G_\A)$.
Then $\mathrm{rk}(\pi_\pl)$ is independent of
$\pl\in\Places$.
\end{theorem*}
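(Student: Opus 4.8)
The plan is to reduce the global equality of ranks to a purely local statement by exploiting the automorphic nature of $\pi$ together with the rank theory developed in \cite{salduke}. Recall that the rank of a unitary representation, as defined via the orbit method, is governed by the wave front set (equivalently, by which coadjoint nilpotent orbits appear in the spectral decomposition of the representation restricted to a suitable unipotent subgroup). Concretely, condition (ii) guarantees that $\bfG$ contains, over $\Q$, a unipotent subgroup $\bfN$ attached to the highest root (a Heisenberg-type group, or at least a group with a large abelian piece on which the relevant characters live), and $\mathrm{rk}(\pi_\pl)$ is read off from the support of the spectral measure of $\pi_\pl|_{N_\pl}$ on the unitary dual $\widehat{N_\pl}$, which is stratified by rank. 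So the first step is: make precise, using Definition \ref{dfnnurkk} and the purity theorem from \cite{salduke}, that $\mathrm{rk}(\pi_\pl)=r$ if and only if the spectral measure of $\pi_\pl|_{N_\pl}$ is supported on the rank-$\le r$ stratum and charges the open rank-$r$ piece.

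The second step is the globalization. Because $\pi\subseteq L^2(G_\Q\bls G_\A)$, we may restrict automorphic functions to the adelic unipotent subgroup $\bfN(\A)$ and study the Fourier expansion along $\bfN(\Q)\bls\bfN(\A)$; this is the standard device behind results of Howe \cite{HoweLowRank} and Li \cite{LiDege}. The characters of $\bfN(\A)$ trivial on $\bfN(\Q)$ that occur in this expansion are precisely those $\psi=\prod_\pl\psi_\pl$ with $\psi$ defined over $\Q$, and the local component $\psi_\pl$ at each place therefore has the \emph{same} orbit type — the rank stratum of $\psi_\pl$ in $\widehat{N_\pl}$ is the one cut out by the $\Q$-rational nilpotent orbit of $\psi$, independently of $\pl$. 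Thus the restriction of $\pi_\pl$ to $N_\pl$ is supported, for each $\pl$, on characters whose rank is at most the maximal rank of a $\Q$-rational character occurring globally, and this bound is the same at every place. This shows $\mathrm{rk}(\pi_\pl)\le r_0$ for a place-independent $r_0$, and, conversely, that the rank-$r_0$ stratum is actually hit at every place (a $\Q$-rational character of that rank occurs in the global Fourier expansion, and since $\pi=\bigotimes_\pl\pi_\pl$, it occurs locally at every $\pl$). Combining the two inequalities gives $\mathrm{rk}(\pi_\pl)=r_0$ for all $\pl$.

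I expect the main obstacle to be the bookkeeping needed to identify the rank of a \emph{local} character $\psi_\pl\in\widehat{N_\pl}$ with a quantity attached to a \emph{rational} coadjoint orbit in a way that is manifestly place-independent — in other words, verifying that the stratification of $\widehat{N_\pl}$ by rank, as built in \cite{salduke} from the orbit method, is compatible with the $\Q$-structure and that a $\Q$-rational nilpotent orbit meets every local stratum in the expected one. This is the place where conditions (ii) (highest root defined over $\Q$, so $\bfN$ and the relevant orbits are $\Q$-rational) and (iii) (weak approximation, so that $\bfG(\Q)$ is dense in each $\bfG(\Q_\pl)$ and local orbits are exhausted by rational points) do the real work. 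A secondary technical point is handling the finite central covering: one must check that the decomposition $\pi\cong\widehat{\bigotimes}_\pl\pi_\pl$ and the Fourier expansion along $\bfN$ descend correctly through \eqref{exactseqA}, which should be routine since $\bfN$ is unipotent and simply connected and hence splits canonically into $G_\A$, so the covering is trivial over $\bfN(\A)$ and plays no role in the argument along $\bfN$.
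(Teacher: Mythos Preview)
Your overall strategy is the same as the paper's, but your language of ``characters of $\bfN(\A)$'' and ``Fourier expansion'' betrays that you are implicitly running the abelian-radical argument of Howe and Li, and this is precisely where the proof breaks down. The unipotent group $\bfU$ built in Section~\ref{therankparabolic} is not a single Heisenberg group attached to the highest root but a \emph{tower} of semidirect products of Heisenberg groups \eqref{ngammatower}; in particular it is non-abelian, its unitary dual is parametrized by coadjoint orbits rather than characters, and $U_\A$ is \emph{not} of Type~I (see \cite[Sec.~7]{Moore}). Consequently there is no uniqueness of direct integral decomposition for $\pi|_{U_\A}$, and you cannot simply read off ``which characters occur'' from an abstract Fourier expansion. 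Your argument for the lower bound (``a $\Q$-rational character of rank $r_0$ occurs globally, hence locally at every $\nu$'') has no content until you say what the analogue of a Fourier coefficient is in this non-abelian setting.

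The paper supplies two ingredients your sketch lacks. First, rank is detected not by projecting onto characters but by an ideal $J_{d,\nu}\subseteq\sS(U_\nu)$ of Schwartz functions: $\pi_\nu$ has $\nu$-rank at most $d$ iff $\pi_\nu(\phi)=0$ for all $\phi\in J_{d,\nu}$ (Lemma~\ref{lemrkdsii}). Manufacturing such separating elements inside the Schwartz algebra when $U_\nu$ is non-abelian is genuinely delicate and occupies Sections~\ref{FuncCalcSec}--\ref{sec-existe} (Dixmier--Hulanicki functional calculus archimedeanly, Gelfand--Kazhdan's results $p$-adically). Second, one does not restrict $\pi$ to $U_\A$ as a representation; one restricts smooth moderate-growth automorphic forms $f\in\ccH^\circ$ as \emph{functions} to the compact quotient $U_\Q\backslash U_\A$ (Lemmas~\ref{lem:contPhi1Dc}--\ref{lemfbn}), lands inside $L^2(U_\Q\backslash U_\A)$ where Moore's multiplicity-free decomposition $\bigoplus_{\mu\in\g u_\Q^*}\rho_\mu$ applies, and then uses Lemma~\ref{orbitdim} (place-independence of the dimension of a $\Q$-rational coadjoint orbit---this is the piece you correctly anticipated) to transport vanishing of $J_{d_\nu,\nu}$ to vanishing of $J_{d_\nu,\nu_1}$. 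Finally, weak approximation is used not to ``exhaust local orbits by rational points'' as you suggest, but to show that $G_\Q G^{\nu_1}F$ is dense in $G_\A$, which is what allows one to propagate the $J_{d_\nu,\nu_1}$-annihilation from the restricted functions on $U_\Q\backslash U_\A$ back to all of $\ccH$ (Steps~3--4 of the proof of Theorem~\ref{THMMaIn}). Without that step your argument proves only a statement about the closure of $W$ in $L^2(U_\Q\backslash U_\A)$, not about $\pi_{\nu_1}$ itself.
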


For further information on basic properties of the central extension 
\eqref{exactseqA}, see for example \cite[Sec. I.1.1]{MW}. Condition (iii) on $\bfG$ is  known to hold for many general classes of groups, e.g., simply connected or split groups.

We briefly comment on the relation between our paper and the recent article of Kobayashi and Savin \cite{KobayashiSavin}. In \cite[Thm 1.1]{KobayashiSavin}, Kobayashi and Savin prove a similar statement for groups that possess a maximal parabolic subgroup which has an abelian unipotent radical and which is conjugate to its opposite. Using the methods and results of \cite[Sec. 6]{salduke} on the relation between the $N$-rank and the rank introduced by the second author, one should be able to deduce the result of Kobayashi and Savin 
from our theorem for unitary automorphic representations. 
In addition, our method of proof is different from the one used by Kobayashi and Savin, because our techniques rely heavily on Kirillov's orbit method for unipotent groups.

We will now explain the key ideas of the proof of our main theorem. The proof is inspired by the original method of  Howe, which is to  detect the rank of a unitary representation by means of operators that come from elements of the convolution algebra of Schwartz (or locally constant compactly supported) functions on a unipotent group $U_{\pl}\subseteq G_{\pl}$. As shown by Howe, this is relatively easy if $U_{\pl}$ is abelian, as one only needs to choose a function whose Fourier transform has a suitable support (see the proof of \cite[Lem. 2.4]{HoweLowRank} and \cite[Lem. 3.2]{LiDege}).
However, in our case $U_{\pl}$ is non-abelian, and finding an element of the Schwartz algebra of $U_\pl$ which separates a point outside
a closed subset 
of the unitary dual of $U_\pl$
requires a nontrivial argument (see 
Section \ref{sec-existe} below).  Of course this can be done by an element of the $C^*$-algebra of $U_{\pl}$, but there is no reason to expect that such an element should come from the Schwartz algebra.
 It is proved in \cite{LudwigArk} that the primitive ideal of  every irreducible unitary representation
 of 
a nilpotent Lie group has a dense intersection with the algebra of Schwartz functions, but this is not enough for our argument, and we need a generalization of this statement for annihilators of many closed subsets of the unitary dual of $U_{\pl}$. We do not know  
if such a  statement is true. In the archimedean case, we get around this technical issue by using classical results of Dixmier 
\cite{DixORF}
and Hulanicki (see \cite{LRS} for a detailed discussion)
about functional calculus of nilpotent Lie groups \cite{DixORF}. 
In the non-archimedean case, we address the analogous technical issue 
 in Sections \ref{FuncCalcSec}--\ref{sec-existe}
using an interesting  result of
S. Gelfand and Kazhdan \cite{GelfandKazhdan}
 about closed subsets of the unitary dual of a $p$-adic unipotent group.

Another point of diversion of our proof from Howe's method is that, if $U_\A$ is 
non-abelian then it is 
not a Type I group \cite[Sec.~7]{Moore}, and therefore uniqueness of direct integral decomposition does not hold for its unitary representations. We circumvent this issue by
reducing the problem to the decomposition of 
$L^2(U_\Q\setminus U_\A)$. By a result of Moore \cite[Thm 11]{Moore}, the latter representation decomposes as a multiplicity-free direct sum of irreducible unitary representations that can be constructed using Kirillov's orbit method (see Section \ref{KirillovA}). Of course we cannot use restriction to $U_\A$ as a $U_\A$-intertwining map 
from $L^2(G_\Q\setminus G_\A)$ to $L^2(U_\Q\setminus U_\A)$, but as shown in the proof of Theorem \ref{THMMaIn}, we can carefully use the restriction of smooth vectors.

Even though our theorem is stated only for groups over the  adele ring of $\Q$, we believe that it remains true for groups over the adele ring of an algebraic number field. The reason why we have hesitated to state our theorem  in the latter more general context is that the results of Moore in \cite{Moore} on the decomposition of $L^2(U_\Q\bls U_\A)$ are only stated for the adele ring of $\Q$.
  It is quite likely that Moore's result holds in the general case, and therefore the proof of our theorem applies \emph{mutatis mutandis} to groups over the adele ring of any algebraic number field.

Finally, 
we remark that it would be interesting to see if our main theorem can be used to obtain interesting information about the wavefront sets of the local components of a small automorphic representation of an exceptional group $\bfG$ (see \cite{SahiGourevich} and 
\cite{HeHong}
for results in this direction).
 We intend to come back to this problem in the near future.\\

\noindent\textbf{Acknowledgement.} We thank
Jean Ludwig for helping us with the proof of Proposition \ref{prp-funcal},  Alireza Salehi Golsefidy for responding to our questions about approximation in algebraic groups, and  Wee-Teck Gan for  pointing to 
\cite{MW}
in relation to Remark \ref{rmk-loccpnt}. We also thank Gerrit van Dijk, Roger Howe, Siddhartha Sahi, and  Martin Weissman for helpful and encouraging discussions and correspondences.
The second author acknowledges support from a Discovery Grant by the Natural Sciences and Engineering Research Council of Canada.

\section{The rank parabolic}
\label{therankparabolic}

This section is devoted to the notation and structure theory that we will need to be able to recall  
 the notion of  rank  and the purity theorem proved in \cite{salduke}. The definition of rank, which will be reviewed in Section
\ref{Sec-nurknk} below, 
  is based  on the existence of \emph{Heisenberg parabolic subgroups}, which we will define first.

Recall that $\bfG$ is an
algebraic group defined over $\Q$, which satisfies conditions (i)--(iii) of  Section \ref{SecIntr}.  Fix a maximally $\Q$-split Cartan subgroup $\bfH\sseq \bfG$. Let $\g h$ denote the Lie algebra of $\bfH$, and let $\Phi\subseteq\g h^*$ denote the absolute root system of $\bfG$ corresponding to $\bfH$. 
Choose a positive system $\Phi^+$  in $\Phi$, and let $\Delta$ denote the corresponding base of $\Phi$.
We denote the highest root of $\Phi$ by
 $\beta_\Phi$. Thus, condition (ii) of Section 
 \ref{SecIntr} states that
\begin{equation}
\label{condition-gp}
\Phi\neq\mathsf A_1\text{ and }
 \beta_\Phi\text{ is defined over }\Q.
 \end{equation}
Let $(\cdot,\cdot)$ denote the canonical symmetric bilinear form induced on $\g h^*$ by the Killing form, and set
\[
\Sigma_\Phi:=\{\alpha\in\Delta\ :\ (\alpha,\beta_\Phi)\neq 0\}.
\]
Then 
$
|\Sigma_\Phi|=2
$ if $\Phi$ is of type $\mathsf{A}_n$, $n>1$, and 
$
|\Sigma_\Phi|=1
$
otherwise \cite{Torasso}.
Furthermore, the unipotent radical 
$\bfU_{\Delta- \Sigma_\Phi}$ of the $\Q$-parabolic subgroup
$\bfP_{\Delta- \Sigma_\Phi}=\bfL_{\Delta- \Sigma_\Phi}\ltimes \bfU_{\Delta- \Sigma_\Phi}$  of $\bfG$ is a Heisenberg group
with center
 $\bfZ_{\Delta- \Sigma_\Phi}=\bfU_{\beta_\Phi}$,
where $\bfU_{\beta_\Phi}$ is the unipotent subgroup of $\bfG$ corresponding to $\beta_\Phi$ (see \cite{Torasso} or 
 \cite{GanSavin} for details).
We call $\bfP_{\Delta-\Sigma_\Phi}$ the 
\emph{Heisenberg parabolic subgroup} associated to $\Phi$.

Next, starting with $\Phi_1:=\Phi$, we define a chain of irreducible root systems  \begin{equation*}
\Phi=\Phi_1\supsetneq\cdots\supsetneq 
\Phi_r \supsetneq\Phi_{r+1}=\varnothing,
\end{equation*}
 by the following inductive construction.
Assume that $\Phi_n$ has been defined 
for some $n\geq 1$, and set $\Phi_n':=\{\alpha\in\Phi_n\, :\, (\alpha,\beta_{\Phi_n})=0\}$.
Observe that $\Phi_n'$ is the root system of the Levi factor of the  
Heisenberg parabolic subgroup associated to $\Phi_n$. By examination of all Dynkin diagrams, it follows that 
$\Phi_n'$ is a Cartesian product of at most three irreducible subsystems. Furthermore, at most one of these subsystems satisfies the assumption \eqref{condition-gp}. We set $\Phi_{n+1}$ equal to the subsystem that satisfies \eqref{condition-gp}
if it exists (in this case it will always be unique), and set $\Phi_{n+1}:=\varnothing$ otherwise.
If $\Phi_{n+1}\neq\varnothing$, then we set  $\Phi_{n+1}^+=\Phi_{n+1}\cap \Phi^+$. The inductive construction stops as soon as $\Phi_{n+1}=\varnothing$.

Set $\Gamma:=\bigcup_{i=1}^r\Sigma_{\Phi_i}$. From now on let
$\bfP=\bfL\ltimes \bfU$ denote the standard $\Q$-parabolic subgroup of $\bfG$ corresponding to $\Delta-\Gamma$. 
Note that
\begin{equation}
\label{ngammatower}
\bfU\simeq \bfU^{}_{r}\ltimes \left(\bfU^{}_{r-1}\ltimes \left(\cdots\ltimes \bfU^{}_{1}\right)\cdots\right),
\end{equation}
where $\bfU_d$ for $1\leq d\leq r$ denotes the unipotent radical of the Heisenberg parabolic subgroup associated to $\Phi_d$.
In particular, $\bfU$ is a tower of semidirect products of Heisenberg groups. From now on, we denote the length of the tower of semidirect products \eqref{ngammatower} by $r(\bfG)$.

\begin{ex}
Assume that $\bfG=\mathrm{SL}_n$. If $n=2k+1$, $k\geq 1$, then $\bfP$ is the Borel subgroup, whereas if $n=2k$, $k>1$, then $\bfP$ is the parabolic subgroup corresponding to the middle node of the Dynkin diagram. 
\end{ex}
The following table indicates the values of $r(\bfG)$ when the algebraic group $\bfG$ is $\Q$-split.

\begin{center}
\renewcommand{\arraystretch}{1.25}
\begin{tabular}{c|c}
$\bfG$ & $r(\bfG)$\\
\hline
$\mathsf{A}_n$ & $\lfloor \frac{n}{2}\rfloor $ \\
$\mathsf{B}_n$ & $\lfloor \frac{n}{2}\rfloor $ \\
$\mathsf{C}_n$ & $ n-1 $ \\
$\mathsf{D}_n$ & $\lfloor \frac{n-1}{2}\rfloor $ \\
$\mathsf{E}_n$ & $\lfloor \frac{n}{2}\rfloor $ \\
$\mathsf{F}_4$ & $3 $ \\
$\mathsf{G}_2$ & $1$ \\
\end{tabular}
\end{center}

\section{The $\pl$-rank of a unitary representation}
\label{Sec-nurknk}
For every place $\pl\in\Places$, we 
identify  $\bfG(\Q_\pl)$ with a subgroup of 
$ \bfG(\A)$
via the canonical embedding
$\Q_\pl\into \A$.
The exact sequence
\eqref{exactseqA} restricts to 
a finite topological central extension 
\begin{equation}
\label{11FFpll}
1\to F_\pl\xrightarrow{\ \mathsf{i}_\pl\ }G_\pl\xrightarrow{\ \mathsf{p}_\pl^{}\ }
\bfG(\Q_\pl)\to 1,
\end{equation}
where $G_\pl:=\mathsf{p}^{-1}(\bfG(\Q_\pl))
$ and $F_\pl:=\mathsf{i}^{-1}(G_\pl)$.
 As shown for example in \cite[Lem.~II.11]{DufloTh} or \cite[Appendix I]{MW}, the above central extension splits over $\bfU(\Q_\pl)$, and there is a unique splitting section
\begin{equation}
\label{secnu}
\mathsf{s}_\pl:\bfU(\Q_\pl)\to G_\pl.
\end{equation}
Set $U_\pl:=\mathsf{s}_\pl(\bfU(\Q_\pl))$. 

Recall that $r=r(\bfG)$.
For $1\leq n\leq r$, let $\bfZ_n$ denote the center of $\bfU_n$. Set
$
U_{n,\pl}:=\mathsf{s}_\pl(\bfU_{n}(\Q_\pl))
$
and 
$ 
Z_{n,\pl}:=\mathsf{s}_\pl(\bfZ_{n}(\Q_\pl))
$.
 From \eqref{ngammatower} it follows that
\begin{equation}
\label{ngammatowerlocal}
U_{\pl}\simeq U_{r,\pl}\ltimes (U_{r-1,\pl}\ltimes (\cdots\ltimes U_{1,\pl})\cdots).
\end{equation}
Furthermore, $U_{n,\pl}$ is a Heisenberg group, that is, $W_{n,\pl}:=U_{n,\pl}/Z_{n,\pl}$ is  a symplectic $\Q_\pl$-vector space, so that the symplectic group $\mathrm{Sp}(W_{n,\pl})$ 
acts on $U_{n,\pl}$ by automorphisms which fix 
$Z_{n,\pl}$ pointwise.
As shown in 
\cite[Sec. 5]{salduke},
for every $1\leq n\leq r-1$, the action 
by conjugation of 
\[
U^{(n)}_{\pl}:=U_{r,\pl}\ltimes (U_{r-1,\pl}\ltimes (\cdots\ltimes U_{n+1,\pl})\cdots)
\]
on $U_{n,\pl}$  factors through $\mathrm{Sp}(W_{n,\pl})$. Furthermore, again by \cite[Lem.~II.11]{DufloTh}, the  
metaplectic central extension
\begin{equation}
\label{lem-oscil}
1\rightarrow \Z/2\Z\rightarrow{\mathrm{Mp}}(W_{n,\pl})
\rightarrow
\mathrm{Sp}(W_{n,\pl})
\rightarrow 1
\end{equation}
splits over $U^{(n)}_\pl$.
By the Stone--von Neumann Theorem, for every nontrivial unitary character $\chi_{n,\pl}$ of $Z_{n,\pl}$ there exists a unique  irreducible unitary representation 
$\rho_{\chi_{n,\pl}^{}}$
of $U_{n,\pl}$ with central character $\chi_{n,\pl}$. 
Next we extend $\rho_{\chi_{n,\pl}}$ to a unitary representation 
 of $U_{\pl}$. To this end,
 first we  extend $\rho_{\chi_{n,\pl}^{}}$ to $U^{(n)}_\pl\ltimes U_{n,\pl}$
by means of the restriction of the
oscillator representation of
${\mathrm{Mp}}(W_{n,\pl})\ltimes U_{n,\pl}$
(this can be done because the exact sequence \eqref{lem-oscil} is split over $U^{(n)}_\pl$).
Subsequently, we
extend 
$\rho_{\chi_{n,\pl}^{}}$
to
 $U_{\pl}$ via the canonical quotient map 
 \[
 U_{\pl}\to U_{\pl}/(U_{n-1,\pl}\ltimes (U_{n-2,\pl}\ltimes (\cdots\ltimes U_{1,\pl})\cdots))\cong U^{(n)}_\pl\ltimes U_{n,\pl}.
 \]
As shown in \cite[Cor. 4.2.3]{salduke}, 
for $1\leq d\leq r$,
the tensor product 
$\rho_{\chi_{1,\pl}^{}}\otimes \cdots\otimes \rho_{\chi_{d,\pl}^{}}$ is an irreducible unitary representation of $U_{\pl}$.
\begin{dfn}
\label{DfnU(d)}
For every $1\leq d\leq r(\bfG)$, the irreducible unitary representations
$\rho_{\chi_{1,\pl}^{}}\otimes \cdots\otimes \rho_{\chi_{d,\pl}^{}}$ of 
$U_{\pl}$ that are constructed above are called \emph{rankable} of rank $d$. The set of 
unitary equivalence classes of rankable representations of $U_\pl$ of rank $d$  will be denoted by $\widehat{U}_\pl(d)$.
\end{dfn}

Let $\widehat{U}_\pl$ denote the unitary dual of $U_\pl$.
Recall that from the well known results of Kirillov in the archimedean case and \cite[Thm 3]{Moore} in the non-archimedean case, the orbit method gives a bijection between 
$\widehat{U}_\pl$ and
the coadjoint orbits of $U_\pl$. 

\begin{lem}
\label{dimcoadj}
For every $1\leq d\leq r(\bfG)$, the coadjoint orbit corresponding to any rankable representation of $U_{\pl}$ of rank $d$ is an analytic manifold  of dimension $\dim(U_{1,\pl})+\cdots+\dim(U_{d,\pl})-d$.
\end{lem}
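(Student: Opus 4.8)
The plan is to compute the dimension of the coadjoint orbit via the orbit method formula: for a unipotent group $U_\pl$ with Lie algebra $\g u_\pl$, the coadjoint orbit $\Omega_\lambda$ through $\lambda\in\g u_\pl^*$ has $\dim\Omega_\lambda=\dim U_\pl-\dim\g u_\pl(\lambda)$, where $\g u_\pl(\lambda)$ is the radical of the alternating form $B_\lambda(X,Y)=\lambda([X,Y])$. So the task reduces to identifying, for a rankable representation $\rho_{\chi_{1,\pl}}\otimes\cdots\otimes\rho_{\chi_{d,\pl}}$, the functional $\lambda$ corresponding to it under the Kirillov correspondence and computing the dimension of the stabilizer $\g u_\pl(\lambda)$; equivalently, showing this stabilizer has dimension $\dim U_\pl-(\dim U_{1,\pl}+\cdots+\dim U_{d,\pl}-d)$.

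First I would recall from the construction in Section \ref{Sec-nurknk} that $\rho_{\chi_{n,\pl}}$ is inflated from a representation of the Heisenberg quotient $U^{(n)}_\pl\ltimes U_{n,\pl}$, and its central character on $Z_{n,\pl}$ is $\chi_{n,\pl}$, nontrivial. The tensor product $\rho_{\chi_{1,\pl}}\otimes\cdots\otimes\rho_{\chi_{d,\pl}}$ then has, on the center $Z_{n,\pl}$ of each Heisenberg layer $U_{n,\pl}$ with $1\le n\le d$, the nontrivial character $\chi_{n,\pl}$. Using \eqref{ngammatowerlocal} and the decomposition of $\g u_\pl$ into layers $\g u_{n,\pl}$, I would write down a representative $\lambda=\lambda_1+\cdots+\lambda_d\in\g u_\pl^*$, where $\lambda_n$ is supported on $\g u_{n,\pl}$, is nonzero on $\g z_{n,\pl}$ (the line that is the Lie algebra of $Z_{n,\pl}$, since each $Z_{n,\pl}$ is one-dimensional as the center of a Heisenberg group), and vanishes on all higher layers. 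The key structural input is that $W_{n,\pl}=U_{n,\pl}/Z_{n,\pl}$ is a symplectic $\Q_\pl$-vector space and $U^{(n)}_\pl$ acts on $U_{n,\pl}$ through $\mathrm{Sp}(W_{n,\pl})$, fixing $Z_{n,\pl}$ pointwise.

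Next I would analyze the radical $\g u_\pl(\lambda)$ directly from the bracket structure of the tower. For the top layer $\g u_{d,\pl}$: since $\lambda_d$ is nontrivial on $\g z_{d,\pl}$ and the form $B_{\lambda_d}$ on $W_{d,\pl}$ is the symplectic form (nondegenerate), the intersection of $\g u_\pl(\lambda)$ with $\g u_{d,\pl}$ is exactly $\g z_{d,\pl}$, contributing corank $\dim U_{d,\pl}-1$ in that layer. Descending through $n=d-1,\ldots,1$: because $U^{(n)}_\pl$ acts on $U_{n,\pl}$ by symplectic automorphisms fixing the center, the bracket $[\g u^{(n)}_\pl,\g u_{n,\pl}]$ together with $\lambda$ nontrivial on $\g z_{n,\pl}$ again forces the contribution of the $n$-th layer to be $\dim U_{n,\pl}-1$ toward the rank, i.e. the radical meets $\g u_{n,\pl}$ in $\g z_{n,\pl}$ plus whatever is needed to pair trivially, but the cross-terms are absorbed because $\lambda$ vanishes on layers $n+1,\ldots,r$. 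Meanwhile the layers $\g u_{n,\pl}$ with $n>d$ lie entirely in the radical since $\lambda$ is zero there and these layers bracket into layers where $\lambda$ also vanishes or into $\g z_{m,\pl}$ with $m\le d$ in a way that — here is the point requiring care — must be checked not to pair nontrivially with $\lambda$. Summing the contributions: $\dim\g u_\pl(\lambda)=\dim U_\pl-\sum_{n=1}^d(\dim U_{n,\pl}-1)=\dim U_\pl-(\dim U_{1,\pl}+\cdots+\dim U_{d,\pl})+d$, hence $\dim\Omega_\lambda=\dim U_{1,\pl}+\cdots+\dim U_{d,\pl}-d$, as claimed. Analyticity of the orbit is automatic: coadjoint orbits of a unipotent (algebraic) group over a local field of characteristic zero are affine varieties, hence analytic manifolds.

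The main obstacle I anticipate is the bookkeeping of the cross-layer brackets: one must verify that the semidirect-product tower structure \eqref{ngammatowerlocal} does not produce additional degeneracy, i.e. that the bracket of a higher layer $\g u_{m,\pl}$ ($m>n$) into the center $\g z_{n,\pl}$ does not cause the chosen $\lambda$ to have a larger-than-expected radical, and conversely that no element of a higher layer $\g u_{m,\pl}$ with $m>d$ is forced out of the radical. The clean way around this is to choose $\lambda$ generically within the appropriate affine subspace (the one defining the rankable class, determined by fixing $\chi_{n,\pl}$ nontrivial for $n\le d$ and demanding triviality on the quotient responsible for lower rank), and to invoke \cite[Cor. 4.2.3]{salduke} together with the Stone--von Neumann/oscillator construction to identify that affine subspace with a single coadjoint orbit; then the dimension count is forced by irreducibility and the Kirillov correspondence, since the dimension of an orbit equals the codimension of a polarizing subalgebra, and the inductively constructed extension exhibits exactly such a polarization of the stated codimension.
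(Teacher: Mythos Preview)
Your proposal is essentially correct, but your emphasis is inverted relative to the paper's own proof. The paper does not attempt the layer-by-layer radical computation at all: it simply observes that the dimension formula is a restatement of \cite[Cor.~4.2.3]{salduke}, the very reference you eventually fall back on after acknowledging that the cross-layer bracket bookkeeping is delicate. So your direct computation of $\dim\g u_\pl(\lambda)$, while plausible in outline, is work the paper regards as already done in the cited reference; your ``clean way around'' is in fact the paper's entire argument for the dimension claim.

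Conversely, the part you dismiss in one line as ``automatic'' --- that the coadjoint orbit is an analytic $\Q_\pl$-manifold --- is precisely what the paper singles out as needing justification, remarking that these details were omitted in \cite{salduke}. The paper's argument runs as follows: (i) the algebraic $\bfU$-orbit $\oline{\mathcal O}_x$ is Zariski-closed in the affine space by \cite[Prop.~4.10]{BorelLAG} (orbits of unipotent groups on affine varieties are closed); (ii) by \cite[Lem.~7.1]{Moore} the $U_\pl$-orbit $\mathcal O_x$ coincides with the set of $\Q_\pl$-points of $\oline{\mathcal O}_x$; (iii) hence $\mathcal O_x$ is closed in the analytic manifold $V_\pl$, and by \cite[II.4.5, Thms~3 and 4]{Serre} it is an analytic $\Q_\pl$-submanifold. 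Your phrase ``affine varieties, hence analytic manifolds'' hides exactly these steps, and in particular step~(ii) --- that the $\Q_\pl$-points of the algebraic orbit are already a single $U_\pl$-orbit --- is not a tautology and genuinely uses unipotence.
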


\begin{proof}
This is basically a restatement of 
\cite[Cor. 4.2.3]{salduke}.
For the definition of an analytic manifold over a local field,  see
\cite[Sec. II.3.2]{Serre}. 
 Unfortunately, in \cite{salduke}, the details of the proof of the fact that  coadjoint orbits are analytic manifolds were not given.
The latter statement is a consequence of the following general observation.
Let $\bfU$ be a unipotent $\Q_\pl$-algebraic group, and let  $\bfU\times \mathbf V\to \mathbf V$ be a 
$\Q_\pl$-action of 
 $\bfU$ on an affine space $\mathbf V$. Let 
 $V_\pl$ and $U_\pl$ denote the sets of 
 $\Q_\pl$-points of $\mathbf U$ and $\mathbf V$. Let $x\in V_\pl$, and let $\oline {\mathcal  O}_x\subseteq\mathbf V$ denote the $\bfU$-orbit of $x$. Set $\mathcal O_x:=\{u\cdot x\,:\, u\in U_\pl\}$.
From \cite[Lem. 7.1]{Moore} it follows that $\mathcal O_x=\oline{\mathcal O}_x\cap V_\pl$, and from 
\cite[Prop. 4.10]{BorelLAG} it follows that
$\oline{\mathcal O}_x$ is a Zariski-closed subset of $\mathbf V$.
Consequently, $\mathcal O_x$ is a Zariski-closed subset of $V_\pl$, hence also a closed subset of the analytic $\Q_\pl$-manifold $V_\pl$.
Finally, from \cite[II.4.5, Thm 3]{Serre} and 
\cite[Sec. II.4.5, Thm 4]{Serre}
it follows that $\mathcal O_x$ is also an analytic $\Q_\pl$-submanifold of $V_\pl$.
\end{proof}

Now let $\pi_\pl$ be a unitary representation of $G_\pl$. We can express the
restriction of $\pi_\pl$ to $U_{\pl}$ in an essentially unique way as a direct integral
\begin{equation}
\label{piNGamma}
\pi_\pl\big|_{{U}_{\pl}}^{}=
\int^\oplus_{{\widehat{U}_\pl}}n_\sigma\sigma \,d\mu(\sigma)\ \text{ where }\ n_\sigma\in\N\cup\{\infty\}.
\end{equation} 
As a consequence of \cite[Thm 5.3.1]{salduke}, we have the following theorem (see Remark \ref{rmkPFSDD} below).
\begin{thm}
\label{mainsalduke}
Let $\pi_\pl$ be a nontrivial irreducible unitary representation of $G_\pl$. Then there exists a unique integer $1\leq d=d(\pi_\pl)\leq r(\bfG)$   such that $\mu\left(\widehat{U}_\pl- \widehat{U}_\pl(d)\right)=0$, where $\mu$ is the Borel measure in the decomposition \eqref{piNGamma}.
\end{thm}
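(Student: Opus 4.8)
The plan is to obtain the statement from the purity theorem \cite[Thm 5.3.1]{salduke} together with an elementary disjointness argument; only two points will require attention, namely the passage from the linear group $\bfG(\Q_\pl)$ to the covering group $G_\pl$, and the uniqueness of $d$, the existence being immediate from \cite[Thm 5.3.1]{salduke}. First I would record that the direct integral \eqref{piNGamma}, and the family of subsets $\widehat U_\pl(d)\subseteq\widehat U_\pl$ of Definition \ref{DfnU(d)}, depend only on the restriction $\pi_\pl\big|_{U_\pl}$ together with the conjugation action of $G_\pl$ on $U_\pl$. Since the section $\mathsf s_\pl$ of \eqref{secnu} is a group homomorphism, $U_\pl$ is a closed subgroup of $G_\pl$ carried isomorphically onto $\bfU(\Q_\pl)$ by $\mathsf p$, and under this isomorphism $U_{n,\pl}$, $Z_{n,\pl}$ and $U^{(n)}_\pl$ correspond to $\bfU_n(\Q_\pl)$, $\bfZ_n(\Q_\pl)$ and the corresponding sub-tower; the central subgroup $F_\pl$ acts trivially by conjugation, so the action of $G_\pl$ on $U_\pl$ factors through $\bfG(\Q_\pl)$; and by the splitting of \eqref{lem-oscil} over $U^{(n)}_\pl$ the oscillator-representation construction of rankable representations is the same as in the linear case. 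Consequently the notion of rank for $\pi_\pl$, and the argument of \cite[Sec.~5]{salduke} proving \cite[Thm 5.3.1]{salduke}, go through verbatim with $G_\pl$ and $L_\pl:=\mathsf p^{-1}(\bfL(\Q_\pl))$ playing the roles of $\bfG(\Q_\pl)$ and $\bfL(\Q_\pl)$; I would spell this out in Remark \ref{rmkPFSDD}. This yields an integer $d=\mathrm{rk}(\pi_\pl)$ with $1\le d\le r(\bfG)$ such that $\mu$ is concentrated on $\widehat U_\pl(d)$, i.e. $\mu(\widehat U_\pl-\widehat U_\pl(d))=0$, the hypothesis that $\pi_\pl$ is nontrivial being used exactly as in \cite{salduke} to exclude the degenerate possibility (the trivial representation of $U_\pl$ is not rankable).

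Next I would deduce uniqueness from Lemma \ref{dimcoadj}. By that lemma a rankable representation of rank $d$ corresponds to a coadjoint orbit of $U_\pl$ of dimension $\dim U_{1,\pl}+\cdots+\dim U_{d,\pl}-d$; since each $W_{n,\pl}=U_{n,\pl}/Z_{n,\pl}$ is a nonzero symplectic $\Q_\pl$-vector space, so that $\dim U_{n,\pl}-1=\dim W_{n,\pl}>0$, this dimension is a strictly increasing function of $d$. Hence rankable representations of distinct ranks have distinct coadjoint orbits, and therefore, by the orbit correspondence recalled before Lemma \ref{dimcoadj}, are inequivalent; thus $\widehat U_\pl(1),\dots,\widehat U_\pl(r(\bfG))$ are pairwise disjoint. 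If now $d_1\neq d_2$ both satisfied $\mu(\widehat U_\pl-\widehat U_\pl(d_i))=0$, then from $\widehat U_\pl(d_1)\subseteq\widehat U_\pl-\widehat U_\pl(d_2)$ we would get $\mu(\widehat U_\pl(d_1))=0$, and hence $\mu(\widehat U_\pl)=\mu(\widehat U_\pl(d_1))+\mu(\widehat U_\pl-\widehat U_\pl(d_1))=0$, contradicting $\pi_\pl\neq 0$. This forces $d_1=d_2$.

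The genuinely hard input is \cite[Thm 5.3.1]{salduke} itself; within the present argument the only point that needs real care is the covering-group reduction in the first paragraph, i.e. checking that restriction to $U_\pl$, the identifications $U_{n,\pl}\cong\bfU_n(\Q_\pl)$, the $\mathrm{Sp}(W_{n,\pl})$-actions, and the splitting of \eqref{lem-oscil} over $U^{(n)}_\pl$ all match the linear situation precisely. I expect no real obstacle here, since everything takes place inside the image of the honest splitting $\mathsf s_\pl$ and the central kernel acts trivially by conjugation, so this amounts to the routine verification that I would relegate to Remark \ref{rmkPFSDD}.
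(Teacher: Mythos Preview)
Your overall strategy---deduce existence from \cite[Thm~5.3.1]{salduke} and supply a separate disjointness argument for uniqueness---is correct, and your uniqueness argument via Lemma~\ref{dimcoadj} is clean (the paper does not spell it out). However, you have misidentified the point that actually requires care in the reduction to \cite{salduke}.

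The covering-group passage you dwell on is essentially a non-issue: as you yourself observe, the restriction to $U_\pl$, the conjugation action, and the oscillator construction all live inside the image of the honest splitting $\mathsf s_\pl$, so $F_\pl$ is invisible and the situation is identical to the linear one. The paper does not comment on this because there is nothing to say.

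The genuine subtlety, and the actual content of Remark~\ref{rmkPFSDD}, is different. The Heisenberg tower constructed in \cite{salduke} is built from highest roots defined over the \emph{local} field $\Q_\pl$, whereas the tower \eqref{ngammatowerlocal} of the present paper uses only highest roots defined over $\Q$. Hence the tower $U^\flat$ of \cite{salduke} may be strictly longer than $U_\pl$: one has $U^\flat=U_{r',\pl}\ltimes\cdots\ltimes U_{1,\pl}$ with $r'\geq r=r(\bfG)$. Consequently \cite[Thm~5.3.1]{salduke} yields purity for $\pi_\pl\big|_{U^\flat}$, with a rank in $\{1,\ldots,r'\}$, not directly for $\pi_\pl\big|_{U_\pl}$ with rank bounded by $r(\bfG)$. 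The bridge recorded in Remark~\ref{rmkPFSDD} is that a rankable representation of $U^\flat$ of rank $d$ restricts to $U_\pl$ as a rankable representation of rank $\min\{d,r\}$; this is what caps $d(\pi_\pl)$ at $r(\bfG)$.

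Your proposal to ``run the argument of \cite[Sec.~5]{salduke} verbatim with $G_\pl$ and $L_\pl=\mathsf p^{-1}(\bfL(\Q_\pl))$'' does not address this: you would be applying the purity machinery to a \emph{different}, shorter tower than the one for which it is proved, without noticing the discrepancy or checking that the inductive mechanism of \cite{salduke} survives the change of Levi. The paper instead applies \cite[Thm~5.3.1]{salduke} as a black box to the $\Q_\pl$-tower $U^\flat$ and then restricts to $U_\pl$.
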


\begin{dfn}
\label{dfnnurkk}
The integer $d(\pi_\pl)$ that is associated to the unitary representation $\pi_\pl$ in 
Theorem \ref{mainsalduke} is called the
\emph{$\pl$-rank} of  $\pi_\pl$.
\end{dfn}
\begin{rmk}
\label{rmkPFSDD}
The proof of  Theorem \ref{mainsalduke} needs some clarification.
The tower of semidirect products
of Heisenberg groups that appears in  
  \cite{salduke} is defined 
by successive consideration of highest roots which are defined over the local field $\Q_\pl$. 
Obviously, a highest root that is defined over $\Q_\pl$ is not necessarily defined over $\Q$. 
  Therefore the tower of semidirect products in \cite{salduke}
  might be longer than 
the one introduced in \eqref{ngammatowerlocal}. 
More precisely, it will be a unipotent group of the form
\[
U^\flat:=U_{r',\pl}\ltimes (\ldots\ltimes U_{r,\pl}\ltimes (U_{r-1,\pl}\ltimes (\cdots\ltimes U_{1,\pl})\cdots)),
\]
where $r'\geq r$.
However, Theorem \ref{mainsalduke} 
follows from \cite[Thm 5.3.1]{salduke} and the fact 
that a rankable representation $\rho$ of $U^\flat$ 
(in the sense defined in \cite{salduke})
of rank $d$  restricts to a rankable representation  of $U_\nu$ of rank  $\min\{d,r\}$. 
\end{rmk}

\begin{rmk}
In \cite[Sec. 1]{salduke}, it is assumed that the residual characteristic of the local field is odd. This assumption is superfluous and indeed it is never used in the proofs of \cite{salduke}. 
\end{rmk}

\begin{rmk}
The relation between the notion of $\nu$-rank defined in Definition \ref{dfnnurkk} and the rank in the sense of Howe \cite{HoweRank}, 
Li \cite{LiSing}, and Scaramuzzi \cite{Scaramuzzi} 
was investigated in detail  in \cite[Sec. 6]{salduke}.
\end{rmk}
\section{Smooth forms of moderate growth}

For every place $\pl\in\Places$, we set 
$\bfG^{\pl}:=
\prod'_{\eta\in\Places-\{\pl\}}\bfG(\Q_{\eta})
$, and we identify 
$\bfG^{\pl}$ with a subgroup of $\bfG(\A)$ in a natural way.
Furthermore, we set 
\begin{equation}
\label{Gupv}
G^\nu:=\mathsf{p}^{-1}(\bfG^\nu)
.
\end{equation}

According to
\cite[Appendix I]{MW}, the central extension 
\eqref{exactseqA}
 splits over $\bfU(\A)$, and the splitting section
$\mathsf{s}:\bfU(\A)\to G_\A$ is unique. 
Set 
$P_\pl:=\mathsf{p}^{-1}(\bfP(\Q_\pl))$, where we identify $\bfP(\Q_\pl)$ with a subgroup of $\bfG(\Q_\pl)\sseq \bfG(\A)$.
Uniqueness of the sections $\mathsf{s}$ and $\mathsf{s}_\pl$, defined in \eqref{secnu}, implies 
$\mathsf{s}\big|_{\bfU(\Q_\pl)}=\mathsf{s}_\pl$, hence
\begin{equation}
\label{secglobal}
U_\pl=\mathsf{s}(\bfU(\Q_\pl))
\ \text{ for every }\ \pl\in\Places,
\end{equation}
and $U_\pl$ is normalized by $P_\pl$.
%
%
%
Set $\A_{\mathrm{fin}}:=
\prod'_{\pl\in\Places-\{\infty\}}\Q_\pl$, so that $\A\cong \R\times \A_{\mathrm{fin}}$. Furthermore, set
\[
U_\A:=\mathsf{s}(\bfU(\A)),\ U_\Q:=G_\Q\cap U_\A,\
\text{and } 
G_{\A_{\mathrm{fin}}}
:=\mathsf{p}^{-1}(\bfG(\A_\mathrm{fin})
).
\] 


 From now on, we fix a norm $\|\cdot\|$ on $G_\R$ 
as follows. We choose a  representation $\iota:G_\R\to \mathrm{SL}_n(\R)$ for some $n>1$ which descends to a faithful representation of $\bfG(\R)$ whose image is closed in $\mathrm{Mat}_{n\times n}(\R)$,
and define \[
\|g\|:=
\left(\sum_{1\leq i,j\leq n}|x_{i,j}|^2\right)^\frac{1}{2}
\ \text{ where }\ {\iota}(g)=[x_{i,j}]^{}_{1\leq i,j\leq n}.
\]
The canonical injection $U_\Q\bls U_\A\into G_\Q\bls G_\A$ is a homeomorphism of $U_\Q\bls U_\A$ onto a closed subset of $G_\Q\bls G_\A$.  The homogeneous spaces $G_\Q\bls G_\A$ and $U_\Q\bls U_\A$ have finite invariant measures (in fact 
$U_\Q\bls U_\A$ is compact).
As usual, 
$L^2(G_\Q\bls G_\A)$ denotes 
the space of complex valued square integrable functions on $G_\Q\bls G_\A$.
The representation of $G_\A$ on $L^2(G_\Q\bls G_\A)$ by right translation will be denoted by $
\sfR(\cdot)$, that is,\[
\big(\sfR(g)f\big)(x):=f(xg)\text{ for }
f\in L^2(G_\Q\bls G_\A),\,x\in G_\Q\bls G_\A\text{ and }g\in G_\A. 
\]
An irreducible unitary representation of $G_\A$ is called an \emph{automorphic representation} if it occurs as a subrepresentation of $\left(\sfR,L^2(G_\Q\bls G_\A)\right)$.


\begin{rmk}
\label{dfnabus}
From now on, we identify functions $f:G_\Q\bls G_\A\to \C$ with left $G_\Q$-invariant functions $f:G_\A\to\C$ in the obvious way. 
Any two left $G_\Q$-invariant measurable maps $G_\A\to\C$ that descend to maps $G_\Q\bls G_\A\to \C$ which are almost everywhere equal to $f$, are also equal everywhere except on a subset of $G_\A$ of Haar measure zero
(this follows from \cite[Lem. 1.3]{MackeyIndI}).
\end{rmk}

\begin{dfn}
\label{dfnAuFo}
A continuous function $f:G_\Q\bls G_\A\to\C$ is called \emph{smooth} if it  satisfies the following two conditions.
\begin{itemize}
\item[(i)] There exists a compact open subgroup $K\subseteq G_{\A_\mathrm{fin}}$ such that $f(gk)=f(g)$ for $g\in G_\A$ and $k\in K$.
\item[(ii)]
For every $x\in G_\A$, the map 
$G_\R\to\C$, $y\mapsto f(xy)$ is in $C^\infty(G_\R)$. 
\end{itemize}
A continuous function $f:G_\Q\bls G_\A\to\C$ is said to be of 
\emph{moderate growth} if it is smooth and satisfies
\begin{equation}
\label{mfboundeq}
|f(xy)|\leq c_{x,f}\|y\|^{m_f}
\text{ for  every }
x\in G_\A
\text{ and every }y\in G_\R,
\end{equation} where
$m_f\in\R^+$ depends only on $f$,  and $c_{x,f}\in\R^+$ depends only on $x$ and $f$.
\end{dfn}
Let $C_c^\infty(G_\A)$ denote the space of smooth compactly supported functions on
 $G_\A$ 
 \cite[Lem. I.2.5]{MW}.
\begin{rmk}
\label{rmkAuFo}
Let $(\pi,\ccH)$ be a unitary representation of 
$G_\A$. 
Fix a Haar measure $dg$ on $G_\A$ and set
\[
\ccH^\circ:=\big\{\pi(\phi)v\,:\, v\in\ccH\ \text{ and }\ \phi\in C_c^\infty(G_\A)\big\},
\text{ where }
\pi(\phi)v:=\int_{G_\A}\phi(g)\pi(g)v\, dg.
\]
Note that $\ccH^\circ$ is a $G_\A$-invariant dense 
subspace of $\ccH$.
We call $\ccH^\circ$ the \emph{G\aa rding space} of $(\pi,\ccH)$.
If $(\pi,\ccH)$ is a 
 subrepresentation of  $L^2(G_\Q\bls G_\A)$, then from
\cite[Lem. I.2.5]{MW} it follows that
every element of $\ccH^\circ$ can be represented by a unique smooth and moderate growth map $G_\Q\bls G_\A\to \C$.
\end{rmk}

\begin{rmk}
\label{rmk-loccpnt}
Consider an irreducible unitary representation  $(\pi,\ccH)$ of $G_\A$. If $G_\A=\bfG(\A)$, then as is well known, we can express $\pi$ as a restricted tensor product $\otimes_{\pi\in\Places}'\pi_\pl$, where each $\pi_\pl$ is an irreducible unitary representation of $G_\pl=\bfG(\Q_\pl)$.  The $\pi_\pl$ are called the \emph{local components} of $\pi$. If $G_\A\neq \bfG(\A)$, then $G_\A$ is not a restricted product of the local factors $G_\pl$, and therefore the above definition of local components is not totally valid. There are various ways to fix this issue by generalizing the notion of local components (and possibly the restricted tensor product) to representations of $G_\A$. The easiest way, which is sufficient for our goals, is as follows. For every $\pl_\circ\in\Places$, the group $G_\A$ is an almost direct product of the groups  $G_{\pl_\circ}$ and
$G^{\pl_\circ}$ which are defined in 
Section \ref{Sec-nurknk} and \eqref{Gupv}.
Thus we can consider $\pi$ as a representation of 
$G_{\pl_\circ}\times G^{\pl_\circ}$.
Since the group $G_{\pl_\circ}$ is Type I,
by \cite[Thm 1.8]{MackeyIndII}
we can decompose $\pi$ into a tensor product $\pi_{\pl_\circ}^{}\otimes \pi_{\pl_\circ}'$ of irreducible unitary representations of respective factors. We call $\pi_{\pl_\circ}$ the 
\emph{local component} of $\pi$ at  $\pl_\circ$.  

With slightly more work (see \cite[Sec. I.1.2]{MW}) one can show that indeed $G_\A$ is isomorphic to a quotient of a restricted product $\prod'_{\pl\in\Places}G_{\pl}$. We can then inflate a representation of $G_\A$ to one of 
$\prod'_{\pl\in\Places}G_{\pl}$, and use the restricted tensor product decomposition with respect to  the latter group.
\end{rmk}
\section{Functional calculus on nilpotent Lie groups}

\label{FuncCalcSec}

Throughout this section $N$ will be a simply connected nilpotent Lie group. 
 Fix a Haar measure $dn $ on $N$. For any $p\geq 1$, we denote the Banach space of complex-valued $p$-integrable functions on $N$ by $L^p(N)$. For every 
$f_1,f_2\in L^1(N)$,  set
$f_1*f_2(a):=\int_N f_1(n)f_2(n^{-1}a)dn$. 
The conjugate-linear involution $f\mapsto f^\dagger$ of $L^1(N)$  is defined by $f^\dagger(n):=\overline{f(n^{-1})}$ for every $n\in N$.
For every $f\in L^1(N)$, set 

\[
f^{*n}:=
\underbrace{f*\cdots *f}_{\text{$n$ times}}
\ \text{ and }\
e^{*f}:=\sum_{n=0}^\infty \frac{f^{*n}}{n!}
\in L^1(N).
\]
Since $N$ is simply connected, the exponential map 
is a diffeomorphism from $\g n:=\Lie(N)$ onto $N$. 
\begin{dfn}
\label{dfnSAN}
The \emph{Schwartz algebra} of $N$, denoted by $\sS(N)$, is the space of functions $f:N\to\C$ such that $f\circ \exp$ is a Schwartz function on $\g n$ in the sense of \cite[Sec. 25]{Treves}.
\end{dfn}
It is well known 
(for example, see \cite[Sec. 6.2]{FujiwaraLudwig})
that $\sS(N)$ is a subalgebra of the convolution algebra $L^1(N)$.

Given any $\phi\in C^\infty_c(\R)$, we set $\widehat{\phi}(t):=\int_{-\infty}^\infty e^{-ist}\phi(s)ds$ for every $t\in\R$.
Furthermore,  for every bounded self-adjoint operator $A$ on a Hilbert space, we define $\phi(A)$ by functional calculus, as in \cite[Chap. VII]{ReSi}.
\begin{prp}
\label{prp-funcal}
Let $f\in C_c(N)$, and let $\phi\in C_c^\infty(\R)$. Assume that
$f=f^\dagger$ and $\phi(0)=0$. Then the
 following statements hold.
\begin{itemize}
\item[\rm (i)]
The integral 
$
\phi\{f\}:=\frac{1}{2\pi }\int_{-\infty}^\infty
\widehat{\phi}(t) e^{*itf}dt
$
converges absolutely in $L^1(N)$, and $\phi\{f\}\in \sS(N)$.
\item[\rm (ii)] For every unitary representation $(\pi,\ccH)$ of $N$, we have
$\pi(\phi\{f\})=\phi(\pi(f))$.
\end{itemize}
\end{prp}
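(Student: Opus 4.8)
The plan is to treat the two parts separately, with part (i) being the analytic heart of the matter. For part (i), the first step is to obtain a norm estimate on the convolution powers $f^{*n}$ in a suitable Schwartz-type seminorm. Here I would exploit the fact that $f\in C_c(N)$ is compactly supported: since $\exp$ is a diffeomorphism, $f\circ\exp$ has compact support in $\g n$, and then $f^{*n}\circ\exp$ is supported in a set growing only linearly in $n$ (using the Baker--Campbell--Hausdorff formula and the polynomial nature of multiplication on a nilpotent group). Combined with a bound on the $L^1$- or sup-norm growth of $f^{*n}$ — which is at most $C^n$ for some constant $C$, from Young's inequality together with the linear support growth — one gets that each Schwartz seminorm of $f^{*n}$ grows at most like $C^n n^k$ for some fixed $k$ depending on the seminorm. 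Since $\widehat\phi$ is rapidly decreasing (as $\phi\in C_c^\infty(\R)$), the integral $\frac{1}{2\pi}\int_{-\infty}^\infty \widehat\phi(t)\sum_n \frac{(it)^n}{n!}f^{*n}\,dt$ then converges absolutely in every Schwartz seminorm, which simultaneously gives absolute convergence in $L^1(N)$ (since $\sS(N)\sseq L^1(N)$ continuously) and the membership $\phi\{f\}\in\sS(N)$, using completeness of $\sS(N)$. The hypotheses $f=f^\dagger$ and $\phi(0)=0$ are not needed for convergence per se; $\phi(0)=0$ ensures that the $n=0$ term (a multiple of the Dirac-type constant, which is not Schwartz) drops out, so it is essential for the conclusion $\phi\{f\}\in\sS(N)$ rather than merely $\phi\{f\}\in\C\delta_e+\sS(N)$.

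For part (ii), the strategy is to push the integral identity through $\pi$. Since $f=f^\dagger$, the operator $\pi(f)$ is bounded and self-adjoint on $\ccH$ (boundedness because $f\in L^1(N)$, self-adjointness from $f=f^\dagger$), so $\phi(\pi(f))$ makes sense by the functional calculus of \cite[Chap. VII]{ReSi}. One then observes that $\pi(e^{*itf})=e^{it\pi(f)}$ for each real $t$ — this follows by expanding both sides, using $\pi(f_1*f_2)=\pi(f_1)\pi(f_2)$ and the $L^1$-continuity of $\pi$ together with the $L^1$-convergence of $e^{*itf}$. Applying $\pi$ to the defining integral for $\phi\{f\}$ and interchanging $\pi$ with the (absolutely convergent, $L^1$-valued) integral yields $\pi(\phi\{f\})=\frac{1}{2\pi}\int_{-\infty}^\infty\widehat\phi(t)e^{it\pi(f)}\,dt$. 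Finally this last integral equals $\phi(\pi(f))$: this is exactly the Fourier-inversion description of the functional calculus for a bounded self-adjoint operator, obtained by testing against spectral measures, i.e.\ $\frac{1}{2\pi}\int\widehat\phi(t)e^{it\lambda}\,dt=\phi(\lambda)$ for $\lambda\in\R$ by Fourier inversion, and then integrating over the spectrum of $\pi(f)$.

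The main obstacle I expect is the quantitative Schwartz-seminorm estimate on $f^{*n}$ in part (i) — specifically, controlling all derivatives of $f^{*n}\circ\exp$ (not just its support and sup-norm) by $C^n n^k$. This requires a careful bookkeeping argument: writing convolution on $N$ in exponential coordinates turns it into an integral over $\g n$ with a polynomial change of variables coming from BCH, and differentiating under the integral sign repeatedly produces polynomial factors in the integration variable, whose size is controlled by the linearly-growing support. One has to check the powers of $n$ appearing stay bounded by a fixed polynomial degree independent of $n$; this is where one genuinely uses nilpotency (so BCH is a finite polynomial expression and the multiplication map has bounded degree). A clean way to organize this is to invoke the known fact that $\sS(N)$ is a Fréchet algebra under convolution with continuous multiplication, so that $\|f^{*n}\|_{k}\le C_k^{\,n}\|f\|_{k'}^{\,n}$ for suitable seminorms — but since $f$ is merely $C_c(N)$ and not a priori in $\sS(N)$, one first smooths or directly estimates; the honest route is to prove the support-plus-derivative bound by hand as sketched above. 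Everything else (the interchange of $\pi$ with the integral, the exponentiation identity, Fourier inversion for the functional calculus) is standard.
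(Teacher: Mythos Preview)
Your approach to part (ii) is sound, and indeed this is essentially the content of Dixmier's lemma once part (i) is in hand. The problem is part (i): your estimate is not strong enough to give convergence.

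You bound the Schwartz seminorms of $f^{*n}$ by $C^n n^k$. Summing, the corresponding seminorm of $e^{*itf}$ is at most $\sum_{n\ge 0}\frac{|t|^n}{n!}C^n n^k$, and this sum equals $e^{C|t|}$ times a polynomial in $|t|$ (these are Bell-type polynomials). So your integrand is bounded by $|\widehat\phi(t)|\cdot p(|t|)\,e^{C|t|}$. For this to be integrable you would need $\widehat\phi$ to decay \emph{exponentially}. But for any nonzero $\phi\in C_c^\infty(\R)$ this is impossible: if $\widehat\phi(t)=O(e^{-a|t|})$ for some $a>0$, then $\phi(x)=\frac{1}{2\pi}\int e^{ixt}\widehat\phi(t)\,dt$ extends holomorphically to the strip $|\mathrm{Im}\,x|<a$, and a holomorphic function vanishing on an open real interval is identically zero. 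Rapid (faster-than-polynomial) decay is all you get, and it does not beat $e^{C|t|}$. The same objection applies to the ``clean'' route via submultiplicativity of Schwartz seminorms: that still yields $C^n$-type growth and hence an exponential bound in $t$.

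What your termwise estimate misses is the cancellation in $\sum_n\frac{(it)^n}{n!}f^{*n}$ coming from the oscillating factors $(it)^n$; this cancellation is precisely what the hypothesis $f=f^\dagger$ buys, since it makes $\pi(f)$ self-adjoint and $e^{it\pi(f)}$ unitary in every representation. The results the paper cites (Dixmier's \cite[Lem.~7]{DixORF} for the $L^1$ statement and Hulanicki's theorem as recorded in \cite[Thm~2.6]{LRS} for the Schwartz conclusion) exploit exactly this, together with the nilpotent structure, to obtain \emph{polynomial} growth in $t$ of the relevant norms of $e^{*itf}$; that is the nontrivial input, and it is what makes the integral against the merely rapidly decreasing $\widehat\phi$ converge. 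Your sketch does not supply this step, and the estimate you propose cannot be repaired by more careful bookkeeping of supports and derivatives alone.
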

\begin{proof}
Absolute convergence of the integral in (i) follows from \cite[Lem. 7]{DixORF}. The fact that $f\in\sS(N)$ follows from \cite[Thm 2.6]{LRS}. (It was pointed out to us by Professor Jean Ludwig that the latter statement was first proved by A. Hulanicki.)
Part (ii) follows from \cite[Lem. 7]{DixORF}. 
\end{proof}

\section{The unitary dual of a unipotent $p$-adic group}
Let $\bfN$ be a unipotent algebraic group defined over $\Q$, and let $N$ be the group of $\Q_\pl$-points of $\bfN$ for some place $\pl\in\Places-\{\infty\}$.
As before, we denote the unitary dual of $N$ by $\widehat{N}$. Members of $\widehat{N}$ are  equivalence classes of irreducible unitary representations of $N$. We now recall the definition of the Fell topology of $\widehat{N}$. 
For any $(\pi,\ccH)\in\widehat{N}$, $k\in\N$, $v_1,\ldots,v_k,w_1,\ldots,w_k\in\ccH$, $\eps>0$, and $\Omega\subseteq N$ compact, 
we define 
\begin{equation}
\label{basopuni}
\cU(\pi,\Omega,\eps;v_1,\ldots,v_k;w_1,\ldots,w_k)
\end{equation}
to be the set of all $(\sigma,\ccK)\in\widehat{N}$ for which there exist $v_1',\ldots,v'_k,w'_1,\ldots,w'_k\in\ccK$ such that
\begin{equation}
\label{lagpilagsig}
\left|\lag \pi(g)v_i,w_j\rag -\lag\sigma(g)v'_i,w'_j\rag\right|<\eps\text{ for every }g\in \Omega\text{ and every }1\leq i,j\leq k.
\end{equation}
The sets defined in \eqref{basopuni} constitute a base  for the Fell topology. 

In \cite{GelfandKazhdan}, Gelfand and Kazhdan define a smooth version of the Fell topology on $\widehat{N}$.  This topology, which will will refer to by the \emph{Gelfand--Kazhdan topology}, is also the one that is used in \cite{bosa}. The main goal of this section is to prove that the Fell topology and the Gelfand--Kazhdan topology of $\widehat{N}$ are  indeed the same. 

In order to define the Gelfand--Kazhdan topology, we need the notion of the \emph{smooth dual} of $N$. 
Let $(\sigma,V)$ be a representation of $N$ on a complex vector space $V$. Recall that a  vector $v\in V$ is called \emph{smooth} if its stabilizer
contains a compact open subgroup of $N$.
Here and thereafter $V^K$ denotes the subspace of $K$-fixed vectors of in $V$.
The representation $(\sigma,V)$  is called \emph{smooth} if  every  $v\in V$ is smooth. We say $(\sigma,V)$ is \emph{admissible} if $\dim(V^K)<\infty$ for every compact open subgroup $K\subseteq N$. 
A smooth representation $(\sigma,V)$ is called \emph{pre-unitary} if it is equipped with an $N$-invariant positive definite Hermitian form. 
The set of equivalence classes of (algebraically) irreducible smooth representations of $N$ is called the \emph{smooth dual} of $N$.

Let $(\sigma,V)$ be an irreducible smooth representation of $N$. From \cite{VD} it follows that 
$(\sigma,V)$ is admissible and pre-unitary. Let 
$(\hat\sigma,\hat V)$ denote the unitary representation of $N$ corresponding to $(\sigma,V)$, so that
$\hat V$ is the Hilbert space completion of $V$.

\begin{prp}
\label{prp-asgnNN0}
The assignment $(\sigma,V)\mapsto (\hat\sigma,\hat V)$ results in a bijective correspondence between the smooth dual and the unitary dual of $N$.
\end{prp}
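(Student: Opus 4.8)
The plan is to establish the two directions of the correspondence separately: that the assignment is well-defined and injective, and that it is surjective. First I would check well-definedness: given an irreducible smooth representation $(\sigma,V)$, the cited result of van Dijk \cite{VD} guarantees that $(\sigma,V)$ is admissible and pre-unitary, so it carries an $N$-invariant positive definite Hermitian form, and the Hilbert space completion $\hat V$ together with the extension $\hat\sigma$ of $\sigma$ by continuity (using that each $\sigma(g)$ is a Hermitian-form isometry, hence bounded) yields a unitary representation of $N$. One must verify that $(\hat\sigma,\hat V)$ is \emph{topologically} irreducible: a closed invariant subspace $W\subseteq\hat V$ intersects $V$ in a smooth invariant subspace (the smooth vectors of a unitary representation of a group with a neighborhood basis of compact open subgroups are dense, and $W\cap V$ consists exactly of the smooth vectors lying in $W$), and algebraic irreducibility of $V$ forces $W\cap V\in\{0,V\}$, whence $W\in\{0,\hat V\}$ by density. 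One must also check that the Hermitian form, hence $(\hat\sigma,\hat V)$, is determined up to unitary equivalence independently of choices — this follows because an irreducible admissible representation admits at most one invariant Hermitian form up to a positive scalar (a standard consequence of a Schur-type lemma for the smooth dual of $N$, since $\End_N(V)=\C$).

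Next I would prove injectivity: if $(\hat\sigma_1,\hat V_1)\cong(\hat\sigma_2,\hat V_2)$ as unitary representations, then a unitary intertwiner $T$ carries smooth vectors to smooth vectors (smoothness is an intrinsic, topology-free notion: $v$ is smooth iff its stabilizer is open, and $T$ is $N$-equivariant), so $T$ restricts to an isomorphism $V_1\xrightarrow{\sim}V_2$ of smooth representations, giving $(\sigma_1,V_1)\cong(\sigma_2,V_2)$. The key input here is again that the subspace of smooth vectors of $\hat V_i$ is precisely $V_i$; this is where I would invoke that $V_i$ is dense in $\hat V_i$ and consists of smooth vectors, combined with the fact that the smooth vectors of any unitary representation of $N$ form a subrepresentation, and that the smooth vectors of $\hat V_i$ cannot be strictly larger than $V_i$ because $V_i$ is already irreducible as a smooth representation and the smooth-vector subspace is invariant and contains $V_i$ — so if it were larger it would be a proper extension, contradicting that the smooth vectors of an irreducible unitary representation of a unipotent $p$-adic group form an irreducible (indeed, the unique) smooth model. (Alternatively one cites directly that for unipotent $p$-adic groups every irreducible unitary representation is the completion of a unique irreducible admissible smooth representation, which is essentially the content of Moore's Theorem 3 cited earlier combined with van Dijk.)

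For surjectivity, I would start from an arbitrary $(\pi,\ccH)\in\widehat N$ and produce a smooth preimage. Using that $N$ has a neighborhood basis of the identity consisting of compact open subgroups $K$, the spaces $\ccH^K$ are finite-dimensional (this is where admissibility of irreducible unitary representations of unipotent $p$-adic groups enters — one can cite Moore \cite[Thm 3]{Moore} for the orbit-method classification, from which admissibility follows, or argue directly), and their union $\ccH^\infty:=\bigcup_K\ccH^K$ is a dense $N$-invariant subspace, i.e.\ a smooth subrepresentation. I would then show $(\pi|_{\ccH^\infty},\ccH^\infty)$ is algebraically irreducible: any nonzero smooth invariant subspace $W$ has dense closure $\bar W$ which is closed and invariant, hence $\bar W=\ccH$ by irreducibility, and then $W=\ccH^\infty$ because $W\supseteq\bar W\cap\ccH^\infty=\ccH^\infty$ (any smooth vector lies in some $\ccH^K$, and $\ccH^K\subseteq\bar W$ forces $\ccH^K\subseteq W$ since $\ccH^K=\bar W^K$ by finite-dimensionality and the fact that taking $K$-fixed vectors is exact on unitary representations via the averaging projection). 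Restricting the Hilbert inner product to $\ccH^\infty$ makes it pre-unitary, and its completion recovers $(\pi,\ccH)$, so $(\pi,\ccH)$ lies in the image. The main obstacle I anticipate is the careful bookkeeping around the smooth-vector subspace: proving that $\ccH^\infty$ is exactly the space of smooth vectors, that it is algebraically irreducible, and that no strictly larger smooth model exists — all of which hinge on admissibility of irreducible representations (finite-dimensionality of $\ccH^K$) and on the density of smooth vectors, so I would make sure to cite \cite{VD} and \cite[Thm 3]{Moore} precisely at those points rather than reprove them.
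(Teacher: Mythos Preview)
Your proposal is correct and uses the same ingredients as the paper (admissibility via \cite{VD}, density of smooth vectors, averaging projections onto $K$-fixed vectors), but one organizational point deserves attention. The paper isolates as its first step the lemma $\hat V^\infty = V$ and proves it directly by an orthogonality argument: a $K$-fixed vector $v\in\hat V$ not lying in $V^K$ would, after subtracting its projection onto $V^K$, be orthogonal to both $V^K$ and $V(K)=\ker P_K$, hence to all of $V$, contradicting density. Your justification of this same fact (in the injectivity paragraph) invokes irreducibility of $\hat V^\infty$, which in turn relies on $\hat V$ being an irreducible unitary representation --- but that is precisely what your well-definedness paragraph is trying to establish, so the reasoning as written is circular. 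The fix is already implicit in your surjectivity paragraph: admissibility gives $\dim V^K<\infty$, and continuity of the averaging projection $P_K$ together with density of $V$ in $\hat V$ forces $\hat V^K=\overline{V^K}=V^K$ for every compact open $K$, whence $\hat V^\infty=V$ without any appeal to irreducibility of $\hat V$. Extract this and state it first, as the paper does. A second, minor difference: the paper's Step~4 (labeled ``injection'') actually proves that algebraically equivalent smooth representations yield unitarily equivalent completions, via uniqueness of the invariant Hermitian form \cite[Prop.~2.1.15]{casselman}; you instead argue the converse direction by restricting a unitary intertwiner to smooth vectors. Both directions are needed for a bijection on equivalence classes, and both are immediate once $\hat V^\infty=V$ is in hand.
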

\begin{proof}
\textbf{Step 1.}
Let $(\sigma,V)$ be an irreducible smooth representation of $N$, and 
let $\hat V^\infty$ denote the subspace of smooth vectors of the unitary representation 
$(\hat\sigma,\hat V)$. 
First we prove that $\hat V^\infty=V$.
Clearly $V\subseteq\hat V^\infty$. To prove the reverse inclusion, let $v\in \hat V^\infty$ and choose a compact open subgroup $K\subseteq N$ such that $K$ lies in the stabilizer of $v$. We can write $V=V^K\oplus V(K)$, where $V(K)$ is the kernel of the projection \[
P_K:V\to V\ ,\ 
P_Kw:=\int_K\sigma(n)w dn.
\] 
Here $dn$ denotes the Haar measure of $N$ which satisfies $\int_Kdn=1$. Let $\lag\cdot,\cdot\rag$ denote the inner product of $\hat V$.
If $v\not\in V^K$ then after replacing $v$ by $v-v_0$, where $v_0$ is the orthogonal projection of $v$ on $V^K$, we can assume that $\lag v,V^K\rag=0$. By invariance of the inner product of $\hat V$,  we obtain 
$\lag v,V(K)\rag=0$. It follows that $\lag v,V\rag=0$, which is a contradiction since $V$ is dense in $\widehat{V}$.

\textbf{Step 2.} We show that $(\sigma,V)\mapsto (\hat\sigma,\hat V)$ is well-defined, that is, 
if $(\sigma,V)$ is an irreducible smooth representation, then  $(\hat\sigma,\hat V)$ is an irreducible unitary representation. Suppose, on the contrary, that $\hat V=\ccV_1\oplus \ccV_2$ where $\ccV_1$ and $\ccV_2$ are non-zero closed $N$-invariant subspaces of the Hilbert space $\hat V$. Then $V=\hat V^\infty=\ccV_1^\infty\oplus\ccV_2^\infty$. Since the subspace of smooth vectors of a unitary representation is dense, both
$\ccV_1^\infty$ and $\ccV_2^\infty$ are non-zero. It follows that $(\sigma,V)$ is reducible, which is a contradiction.

\textbf{Step 3.} We show that the assignment $(\sigma,V)\mapsto (\hat\sigma,\hat V)$  is surjective.
Fix an irreducible unitary representation $(\sigma,\ccV)$ of $N$.  It is now enough to show that 
the smooth representation of $N$ on $V:=\ccV^\infty$ (the space of smooth vectors of $\ccV$)
 is (algebraically) irreducible.
Assume that $W\subsetneq V$ is an $N$-invariant subspace. 
As in Step 1, for every compact open subgroup $K\subseteq N$ we can write $V=V^K\oplus V(K)$.
Note that $W=W^K\oplus (W\cap V(K))$. Since $W\neq V$, we can choose $K$ such that $W^K\subsetneq V^K$. 
By the remark made at the end of \cite[Sec. 5]{VD}, $V$ is admissible. Thus
$\dim (V^K)<\infty$, and therefore we can choose $w\in V^K$ such that $\lag w,W^K\rag=0$. As in Step 1, we have $\lag w,V(K)\rag=0$, so that $\lag w,W\rag=0$. It follows that the closure of $W$ in $\ccV$ is an 
$N$-invariant subspace which does not contain $w$. From irreducibility of $(\sigma,\ccV)$ it follows that $W=\{0\}$.
This completes the proof of irreducibility of $V$.

\textbf{Step 4.} We show that the assignment $(\sigma,V)\mapsto (\hat\sigma,\hat V)$ is an injection. To this end, we need to show that if $(\sigma_1,V_1)$ and $(\sigma_2,V_2)$ are algebraically equivalent smooth representations, then the unitary representations $(\hat\sigma_1,\hat V_1)$ and $(\hat\sigma_2,\hat V_2)$ are unitarily equivalent. Let $T:V_1\to V_2$ be a linear map such that $T\sigma_1(n)=\sigma_2(n)T$ for every $n\in N$. Then the inner product $\lag v,w\rag' :=\lag Tv,Tw\rag$ on $V_1$ is also $N$-invariant.  By 
\cite[Prop. 2.1.15]{casselman}, up to scalar there exists at most one invariant inner product on $V_1$. It follows that up to a scalar, $T$ is an isometry. Therefore $T$ extends to an intertwining operator $\hat{V}_1\to \hat{V}_2$ by continuity.
\end{proof}
We are now ready to give the definition of the Gelfand--Kazhdan topology of $\widehat{N}$.  Because of Proposition \ref{prp-asgnNN0}, it is enough to define this topology on the smooth dual of $N$.
The Gelfand--Kazhdan topology is generated by the base of open sets  \[
\cU:=\cU(\pi,\Omega,\eps;v_1,\ldots,v_k;\lambda_1,\ldots,\lambda_k),
\]   
where  $(\pi,V)$ is an irreducible smooth representation of $N$, $k\in \N$, $\eps>0$, $\Omega\subseteq N$ is compact,
$v_1,\ldots,v_k\in V$, and
$\lambda_1,\ldots,\lambda_k\in V^*$ (the algebraic dual of the vector space $V$). The definition of $\cU$ is similar to the definition of the base sets of the Fell topology, with relation \eqref{lagpilagsig}  replaced by 
\begin{equation}
\left|\lambda_i(\pi(g)v_j)-\lambda'_i(\pi(g)v_j')
\right|<\eps\text{ for every }g\in N\text{ and every }1\leq i,j\leq k.
\end{equation}

\begin{rmk}
\label{rmkmatcoeff} Let $(\sigma,V)$ be an irreducible smooth representation of $N$. Let $v_1,\ldots,v_k\in V$, and let $\lambda\in V^*$. Fix a compact subset $\Omega\subseteq N$. Since $N$ is a union of compact open subgroups, there exists a compact open subgroup 
$L\subseteq N$ such that $\Omega\subseteq L$. It follows that the vector space \[
W:=\spn_\C\left\{\pi(g)v_i\,:\,g\in \Omega\text{ and }1\leq i\leq k\right\}
\] is finite dimensional. Recall that $(\pi,V)$ is pre-unitary \cite{VD}. If $\lag\cdot,\cdot\rag$ denotes the inner product of $V$, then it follows immediately that there exists a vector $v\in V$ such that $\lambda(\pi(g)v_i)=\lag \pi(g)v_i,v\rag$ for every $g\in \Omega$ and every $1\leq i\leq k$. 
\end{rmk}

\begin{prp}
\label{prp-topol}
The Fell topology and the Gelfand--Kazhdan topology on $\widehat{N}$ are identical.
\end{prp}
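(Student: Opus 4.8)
The plan is to show that the Fell basis and the Gelfand--Kazhdan basis mutually refine one another: around every point of a basic open set of one topology there sits a basic open set of the other. By Proposition \ref{prp-asgnNN0} both topologies live on the same set $\widehat N$, and the only structural difference between a Fell basic set $\cU(\pi,\Omega,\eps;v_1,\dots,v_k;w_1,\dots,w_k)$ and a Gelfand--Kazhdan basic set $\cU(\pi,\Omega,\eps;v_1,\dots,v_k;\lambda_1,\dots,\lambda_k)$ is that the former uses Hilbert-space vectors $v_i,w_j\in\hat V$ while the latter uses smooth vectors $v_i\in V$ and arbitrary algebraic functionals $\lambda_j\in V^*$ (in both cases the matrix-coefficient estimate being imposed, as in the Fell case, for $g$ in the compact set $\Omega$). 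I would pass back and forth using two devices: (a) density of the smooth vectors in every unitary representation of $N$, which together with $\|\sigma(g)\|=1$ lets me replace a Hilbert-space vector by a nearby smooth one at the cost of a perturbation of all matrix coefficients that is small \emph{uniformly} in $g$; and (b) Remark \ref{rmkmatcoeff}, which on a fixed compact set $\Omega$ realizes the value of an arbitrary $\lambda\in V^*$ on the (finite-dimensional) span of $\{\pi(g)v_i:g\in\Omega\}$ as an inner product against a genuine vector of $V$.

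First I would check that every Gelfand--Kazhdan basic set $\cU=\cU(\pi,\Omega,\eps;v_\bullet;\lambda_\bullet)$ is Fell-open. Given $\pi\in\cU$, Remark \ref{rmkmatcoeff} produces $w_1,\dots,w_k\in V\sseq\hat V$ with $\lambda_j(\pi(g)v_i)=\lag\pi(g)v_i,w_j\rag$ for all $g\in\Omega$ and all $i,j$; I would then argue that the Fell basic set $\cU(\pi,\Omega,\eps/3;v_\bullet;w_\bullet)$ is contained in $\cU$. Indeed, if $\sigma$ lies in it, pick $v_i',w_j'\in\hat V_\sigma$ witnessing the Fell inequality on $\Omega$, choose smooth $v_i''\in V_\sigma$ with $\|v_i''-v_i'\|\,\|w_j'\|<\eps/3$ for all $i,j$, and set $\lambda_j':=\lag\,\cdot\,,w_j'\rag|_{V_\sigma}\in V_\sigma^*$. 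Then $\lambda_j'(\sigma(g)v_i'')=\lag\sigma(g)v_i'',w_j'\rag$, which differs from $\lag\sigma(g)v_i',w_j'\rag$ by at most $\|v_i''-v_i'\|\,\|w_j'\|$ uniformly in $g$, and combining this with the Fell inequality shows that $v_i'',\lambda_j'$ witness membership of $\sigma$ in $\cU$.

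For the reverse inclusion I would show that every Fell basic set $\cU(\pi,\Omega,\eps;v_\bullet;w_\bullet)$ is Gelfand--Kazhdan-open. Fixing a point $\pi$ of it, I would first pick smooth vectors $\tilde v_i,\tilde w_j\in V$ so close to $v_i,w_j$ that, since $\|\pi(g)\|=1$, replacing the $v_i,w_j$ by the tildes changes every matrix coefficient $\lag\pi(g)v_i,w_j\rag$ by less than $\eps/3$ uniformly in $g$; hence $\cU(\pi,\Omega,\eps/3;\tilde v_\bullet;\tilde w_\bullet)\sseq\cU(\pi,\Omega,\eps;v_\bullet;w_\bullet)$, and it suffices to fit a Gelfand--Kazhdan neighbourhood of $\pi$ inside the smaller set. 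Since $\tilde w_j\in V$, the functionals $\mu_j:=\lag\,\cdot\,,\tilde w_j\rag|_V$ lie in $V^*$, and I would show that the Gelfand--Kazhdan basic set $\cU(\pi,\Omega,\eps/3;\tilde v_\bullet;\mu_\bullet)$ does the job: if $\sigma$ lies in it, take the (smooth) witnesses $v_i'\in V_\sigma$ and $\lambda_j'\in V_\sigma^*$ for the estimate on $\Omega$, apply Remark \ref{rmkmatcoeff} to $\sigma$, the vectors $v_i'$ and the compact set $\Omega$ to get $w_j'\in V_\sigma\sseq\hat V_\sigma$ with $\lambda_j'(\sigma(g)v_i')=\lag\sigma(g)v_i',w_j'\rag$ on $\Omega$, and observe that $v_i',w_j'$ then witness $\sigma\in\cU(\pi,\Omega,\eps/3;\tilde v_\bullet;\tilde w_\bullet)$.

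I do not expect a genuinely hard obstacle here: once Remark \ref{rmkmatcoeff} is in hand the argument is a bookkeeping exercise resting on that remark together with density of smooth vectors, and the substantive input --- namely that $N$ is a union of compact open subgroups (so translates of a smooth vector over a compact set span a finite-dimensional space) and that irreducible smooth representations of $N$ are admissible and pre-unitary (\cite{VD}) --- has already been isolated in the proof of Proposition \ref{prp-asgnNN0} and in Remark \ref{rmkmatcoeff}. The one point to watch is the asymmetry of the two basic moves: replacing a Hilbert-space vector by a smooth one perturbs matrix coefficients uniformly in $g$, whereas the replacement of an algebraic functional by an inner product via Remark \ref{rmkmatcoeff} is valid only over the prescribed compact set, so one must keep $\Omega$ fixed throughout and carry the tolerances $\eps/3$ at each stage.
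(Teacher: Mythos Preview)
Your proposal is correct and follows exactly the approach the paper indicates: the paper's proof is left to the reader with the hint that it ``follows from Remark~\ref{rmkmatcoeff} and the fact that in any unitary representation, an arbitrary matrix coefficient can be approximated uniformly by matrix coefficients of smooth vectors,'' and these are precisely the two devices (b) and (a) you organize your argument around. One small point of phrasing: you say you will show each basic set of one topology is \emph{open} in the other, but what you actually verify is that each basic set centered at $\pi$ contains a basic set of the other kind centered at the same $\pi$; this is the cofinality of neighborhood bases and suffices to conclude the topologies agree, so the argument is fine, but strictly speaking to show a given $\cU(\pi,\ldots)$ is open you would first recenter at an arbitrary $\sigma_0\in\cU$ via the triangle inequality.
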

\begin{proof}
The proof is straightforward and left to the reader. It  follows from Remark \ref{rmkmatcoeff} and the fact that in any unitary representation, an arbitrary matrix coefficient can be approximated uniformly by matrix coefficients of smooth vectors.  
\end{proof}
Recall 
that $U_\pl$ denotes the group 
defined in \eqref{ngammatowerlocal}.
\begin{cor}
\label{cor-uoverAdu}
For every $\pl\in\Places$, the unitary dual 
 $\widehat{U}_\pl$, equipped with the Fell topology, is homeomorphic to 
the quotient space $\g u_\pl^*/\Ad^*(U_\pl)$. 
\end{cor}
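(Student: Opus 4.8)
The plan is to invoke the orbit method (Kirillov in the archimedean case, Moore's \cite[Thm 3]{Moore} in the non-archimedean case), which furnishes a bijection between $\widehat{U}_\pl$ and the coadjoint orbit space $\g u_\pl^*/\Ad^*(U_\pl)$, and then upgrade this bijection to a homeomorphism for the Fell topology on the left and the quotient topology on the right. In the archimedean case this is classical: the Kirillov bijection is a homeomorphism, a result that can be cited (e.g.\ from the treatment in \cite{FujiwaraLudwig}, or the original work of Brown and others on the topology of the dual of a nilpotent Lie group). So the real content is the non-archimedean case, where $U_\pl=\mathsf{s}_\pl(\bfU(\Q_\pl))$ for a place $\pl\neq\infty$.

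For the $p$-adic case I would argue as follows. First, by Proposition \ref{prp-topol} the Fell topology on $\widehat{U}_\pl$ coincides with the Gelfand--Kazhdan topology; hence it suffices to show that the Kirillov/Moore parametrization $\g u_\pl^*/\Ad^*(U_\pl)\to\widehat{U}_\pl$ is a homeomorphism when the target carries the Gelfand--Kazhdan topology. This is precisely the setting in which Gelfand and Kazhdan worked in \cite{GelfandKazhdan}: they construct the irreducible smooth representations attached to coadjoint orbits and prove that the resulting map from the orbit space to the smooth dual, topologized in the smooth (Gelfand--Kazhdan) sense, is a homeomorphism. So the step here is essentially to cite \cite{GelfandKazhdan} together with Moore's identification \cite[Thm 3]{Moore} of the unitary dual with the smooth dual (which we have re-derived in Proposition \ref{prp-asgnNN0}) and with the coadjoint orbit space. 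One technical point to check is that the coadjoint action $\Ad^*(U_\pl)$ on $\g u_\pl^*$ has a well-behaved quotient: by \cite[Lem. 7.1]{Moore} and \cite[Prop. 4.10]{BorelLAG} (exactly as used in the proof of Lemma \ref{dimcoadj}) the coadjoint orbits are Zariski-locally closed, so the orbit space is a reasonable topological space and the parametrization is genuinely bijective.

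The main obstacle I anticipate is matching up the two notions of topology cleanly, rather than any deep new input: one must be careful that the "smooth version of the Fell topology" used by Gelfand--Kazhdan, the topology used in \cite{bosa}, and the Gelfand--Kazhdan topology as we have defined it via matrix coefficients $\lambda_i(\pi(g)v_j)$ all literally agree, and that under this identification the continuity and openness of the orbit parametrization hold in both directions. Continuity of the map (orbits to representations) follows from the explicit construction of the representations by induction from polarizing subalgebras and the fact that matrix coefficients vary continuously with the orbit data. The openness (equivalently, continuity of the inverse) is the slightly more delicate direction and is exactly what \cite{GelfandKazhdan} supplies; I would state it as such and refer the reader there.

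Concretely, the proof would read: for $\pl=\infty$ the statement is the classical theorem that the Kirillov map is a homeomorphism for the Fell topology on $\widehat{U}_\infty$. For $\pl\neq\infty$, combine Proposition \ref{prp-asgnNN0} (unitary dual $=$ smooth dual), Proposition \ref{prp-topol} (Fell $=$ Gelfand--Kazhdan), the bijection between the smooth dual and $\g u_\pl^*/\Ad^*(U_\pl)$ coming from \cite[Thm 3]{Moore} and the orbit-method construction of \cite{GelfandKazhdan}, and finally the theorem of \cite{GelfandKazhdan} that this bijection is a homeomorphism for the Gelfand--Kazhdan topology on one side and the quotient topology induced from the affine space $\g u_\pl^*$ on the other. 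Since $\g u_\pl^*$ is a finite-dimensional $\Q_\pl$-vector space with its natural topology and $\Ad^*(U_\pl)$ acts by algebraic automorphisms with locally closed orbits, the quotient topology on $\g u_\pl^*/\Ad^*(U_\pl)$ is the one intended in the statement, and the corollary follows.
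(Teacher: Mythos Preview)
Your approach is essentially the same as the paper's: for $\pl=\infty$ cite the classical result (the paper cites \cite{brown}), and for $\pl\neq\infty$ reduce to the Gelfand--Kazhdan topology via Proposition~\ref{prp-topol} and then invoke an external reference for the homeomorphism between the orbit space and the smooth dual. The only substantive difference is the citation: the paper attributes the non-archimedean homeomorphism to \cite[Thm~3.1]{bosa} rather than to \cite{GelfandKazhdan} directly, so you should verify that \cite{GelfandKazhdan} actually contains the full homeomorphism statement you need (the paper treats \cite{GelfandKazhdan} as the source of the topology and \cite{bosa} as the source of the homeomorphism); the auxiliary material you add on Moore, Proposition~\ref{prp-asgnNN0}, and orbit closedness is not needed for the argument.
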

\begin{proof}
For $\pl=\infty$, this is proved in \cite{brown}.
For $\pl\in\Places-\{\infty\}$, it is shown in \cite[Thm 3.1]{bosa} 
that
$\g u_\pl^*/\Ad^*(U_\pl)$ is homeomorphic to 
$\widehat{U}_\pl$ equipped with the Gelfand--Kazhdan topology. The corollary now follows from Proposition \ref{prp-topol}.
\end{proof}

\section{Existence of separating Schwartz functions}
\label{sec-existe}
For every $\pl\in\Places$, recall that $U_\pl$ is isomorphic to  the group of $\Q_\pl$-points of the unipotent algebraic group $\bfU$ defined in Section \ref{therankparabolic}.
From \cite[Chap. IV]{DG} it follows that $U_\infty$ is a simply connected nilpotent Lie group. 
As 
in Definition \ref{dfnSAN}, 
we denote the algebra of Schwartz
functions on $U_\infty$ by $\sS(U_\infty)$. 
For $\pl\in\Places-\{\infty\}$, we set $\sS(U_\pl):=C_c^\infty(U_\pl)$, where
$C_c^\infty(U_\pl)$ is the convolution algebra of 
compactly supported locally constant complex-valued functions on $U_\pl$. 

Given $\pl\in \Places$ and a unitary representation 
$\pi$ of $U_\pl$, we  set
\begin{equation}
\label{eqGAL}
\pi(\psi):=
\int_{U_\pl}\psi(n)\pi(n)dn\text{ for every }\psi\in\sS(U_\pl),
\end{equation}
where $dn$ is a Haar measure  on $U_\pl$.
\begin{prp}
\label{vinfSU}
Fix $\pl\in \Places$. Let $S\subseteq \widehat{U}_\pl$ be a closed subset with respect to the Fell topology, and let $\pi\in \widehat{U}_\pl- S$. Then there exists an element $\psi\in \sS(U_\pl)$ such that
$\pi(\psi)\neq 0$, but $\sigma(\psi)=0$ for every $\sigma\in S$.
\end{prp}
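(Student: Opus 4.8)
The plan is to treat the archimedean and non-archimedean cases separately, since the Schwartz algebra $\sS(U_\pl)$ has a different nature in each. In both cases the key structural input is Corollary \ref{cor-uoverAdu}, which identifies $\widehat{U}_\pl$ with the orbit space $\g u_\pl^*/\Ad^*(U_\pl)$, so that $S$ corresponds to an $\Ad^*(U_\pl)$-invariant closed subset $\widetilde S\subseteq \g u_\pl^*$ not containing the orbit $\mathcal O_\pi$ of (a functional representing) $\pi$. Thus there is a genuine closed set in a finite-dimensional vector space separating $\mathcal O_\pi$ from $\widetilde S$, and the task is to realize this separation by an operator coming from $\sS(U_\pl)$ rather than merely from $C^*(U_\pl)$.

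For $\pl=\infty$: first I would choose a function $f\in C_c(U_\infty)$ with $f=f^\dagger$ such that the self-adjoint operator $\pi(f)$ has nonzero spectrum but $\sigma(f)=0$ for all $\sigma\in S$ — this is exactly the kind of separation furnished by the $C^*$-algebra of $U_\infty$, or more concretely one can build $f$ by hand from a bump function whose operator-side behaviour is controlled via the orbit picture and the Stone--von Neumann description of the $\pi(f)$'s on the Heisenberg pieces. Then I would pick $\phi\in C_c^\infty(\R)$ with $\phi(0)=0$ and $\phi$ equal to $1$ on a nonzero part of $\Spec(\pi(f))$. Applying Proposition \ref{prp-funcal}, the element $\psi:=\phi\{f\}$ lies in $\sS(U_\infty)$, and by part (ii) of that proposition $\pi(\psi)=\phi(\pi(f))\neq 0$ while $\sigma(\psi)=\phi(\sigma(f))=\phi(0)=0$ for every $\sigma\in S$. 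The point is that functional calculus with a function vanishing at $0$ preserves membership in the Schwartz algebra, which is precisely the Dixmier--Hulanicki input encoded in Proposition \ref{prp-funcal}.

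For $\pl\in\Places-\{\infty\}$: here $\sS(U_\pl)=C_c^\infty(U_\pl)$ and the obstacle is that there is no smooth functional calculus available. Instead I would use the Gelfand--Kazhdan result (the content of \cite{GelfandKazhdan}, via Proposition \ref{prp-topol} and \cite[Thm 3.1]{bosa}): closed $\Ad^*(U_\pl)$-invariant subsets of $\g u_\pl^*$ correspond, through the orbit method, to closed ideals in $C_c^\infty(U_\pl)$ (equivalently, the hull-kernel picture for the smooth dual is governed by the orbit space), so the annihilator $\mathscr I(S)$ of $S$ inside $C_c^\infty(U_\pl)$ is not contained in the primitive ideal $\ker\pi$; any $\psi\in\mathscr I(S)\bls\ker\pi$ does the job. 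Concretely one can construct such a $\psi$ by taking a locally constant compactly supported function on $\g u_\pl^*$ (under a suitable polynomial/exponential coordinatization) supported near $\mathcal O_\pi$ and away from $\widetilde S$, symmetrizing over $\Ad^*(U_\pl)$, and transporting it to $U_\pl$ via the orbit-method recipe, then checking that $\pi$ sees it but every $\sigma\in S$ kills it.

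The main obstacle I expect is the $p$-adic case: one needs the precise statement that the correspondence of Corollary \ref{cor-uoverAdu} is compatible with the algebra structure well enough to guarantee that a closed invariant subset of $\g u_\pl^*$ has an annihilator in $C_c^\infty(U_\pl)$ that is \emph{not} swallowed by $\ker\pi$ — i.e., that $C_c^\infty(U_\pl)$ (and not just $C^*(U_\pl)$) separates a point of the dual from a closed set. As the introduction itself flags, the analogue of Ludwig's density result \cite{LudwigArk} for annihilators of arbitrary closed subsets is not known, so the argument must exploit the special tower-of-Heisenberg-groups structure of $U_\pl$ together with the Gelfand--Kazhdan machinery rather than a general nilpotent-group fact; making that reduction clean and verifying that the symmetrized bump function's operator has the asserted nonvanishing/vanishing behaviour is where the real work lies.
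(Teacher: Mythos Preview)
Your archimedean plan has the right skeleton but a real gap in Step~1. You ask for $f\in C_c(U_\infty)$ with $\sigma(f)=0$ \emph{exactly} for every $\sigma\in S$, and justify this as ``the kind of separation furnished by the $C^*$-algebra.'' But the $C^*$-algebra only gives you such an element $a\in C^*(U_\infty)$; once you approximate $a$ by a compactly supported (or Schwartz) $f$, the exact vanishing is destroyed and you only get $\|\sigma(f)\|$ uniformly small. Indeed, the paper's introduction explicitly flags that the density of $\sS(U_\infty)$ in the annihilator of an arbitrary closed $S$ is \emph{not known}, so you cannot assume it. The paper's fix is exactly to absorb this approximation error into the functional calculus: one arranges $\|\pi(f)\|=1$ and $\|\sigma(f)\|<\tfrac14$, and then chooses $\phi$ vanishing on all of $[-\tfrac14,\tfrac14]$ (not merely $\phi(0)=0$) and equal to $1$ near $\pm 1$. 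Then $\phi(\sigma(f))=0$ because $\Spec(\sigma(f))\subseteq[-\tfrac14,\tfrac14]$, while $\phi(\pi(f))\neq 0$ because $\pm 1\in\Spec(\pi(f))$. Your version, with only $\phi(0)=0$, would not kill $\sigma(\psi)$.

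In the non-archimedean case your proposal is essentially a restatement of the goal rather than a mechanism, and the ``symmetrize a bump function over $\Ad^*(U_\pl)$'' idea does not work as stated (the orbits are noncompact, and there is no obvious way to transport such a function to an element of $C_c^\infty(U_\pl)$ with the required operator behaviour). More importantly, your expectation that one must exploit the tower-of-Heisenberg structure is a red herring: the paper's argument works for any unipotent $p$-adic group. The actual key idea is to pass to a compact open $K_0$ with $V^{K_0}\neq 0$ and work in the Hecke algebra $\cH_{K_0}^{K_n}$ of $K_0$-bi-invariant functions supported in a large compact open $K_n$. Gelfand--Kazhdan show this algebra is finite-dimensional semisimple, and that for $n$ large the $\cH_{K_0}^{K_n}$-modules $V^{K_0}$ and $W^{K_0}$ are disjoint for every $(\sigma,W)\in S$ with $W^{K_0}\neq 0$. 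Artin--Wedderburn then hands you an idempotent $\psi\in\cH_{K_0}^{K_n}\subseteq C_c^\infty(U_\pl)$ acting nontrivially on $V^{K_0}$ and by zero on each such $W^{K_0}$; the case $W^{K_0}=0$ is automatic since $\psi$ is $K_0$-bi-invariant. This finite-dimensional reduction is the substantive content you are missing.
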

\begin{proof}
We consider two separate cases.

\textbf{Case 1:} $\pl=\infty$. 
 Let $C^*(U_\infty)$ denote the $C^*$-algebra of $U_\infty$. Since $C^*(U_\infty)$ is CCR
(see \cite[Thm 11]{Moore} and the references therein), the Fell topology on $\widehat{U}_\infty$ 
 and the hull-kernel topology on the dual of $C^*(U_\infty)$ are identical \cite[Sec. 7.2]{Folland}. It follows that there exists an element $a\in C^*(U_\infty)$ such that $\|\pi(a)\|=1$, whereas $\sigma(a)=0$ for every $\sigma\in S$. Substituting $a$ by $aa^*$ if necessary, we can assume that $a=a^*$. 
 
 Let $C^\infty_c(U_\infty)$ denote the space of functions $U_\infty\to\C$ which are smooth and have compact support. Since $C^\infty_c(U_\infty)$ 
is a dense subspace of $C^*(U_\infty)$, we can choose
$f\in C^\infty_c(U_\infty)$ 
satisfying $f=f^\dagger$
such that $\|\pi(f)\|=1$, whereas 
$\|\sigma(f)\|<\frac{1}{4}$ for every $\sigma\in S$.  Next we choose $\phi\in C^\infty_c(\R)$ such that  
\[
\supp(\phi)\subseteq[-2,2],\quad
\phi\big|_{[-\frac{1}{4},\frac{1}{4}]}=0,\quad
\text{and}\quad 
\phi\big|_{[-\frac{5}{4},-\frac{3}{4}]}=
\phi\big|_{[\frac{3}{4},\frac{5}{4}]}=
1.
\]
Now set $\psi:=\phi\{f\}$. By Proposition
\ref{prp-funcal}(i), we have $\psi\in\sS(U_\infty)$. 
Set $A:=\pi(f)$ and note that $A$ is self-adjoint because $f=f^\dagger$. It follows that the spectral radius of $A$ is equal to $\|A\|=1$. Since
$\phi(\pm 1)=1$, we obtain $\|\phi(A)\|=\sup\{|\phi(x)|\,:\,x\in\mathrm{Spec}(A)\}>0$.
Consequently,  
Proposition \ref{prp-funcal}(ii) implies that $\pi(\psi)=\phi(A)\neq 0$. Similarly, for every $\sigma \in S$, the spectral radius of $\sigma(f)$ is equal to $\|\sigma(f)\|<\frac{1}{4}$. It follows that  $\phi$ vanishes on the spectrum of $\sigma(f)$, and therefore
$\sigma(\psi)=\phi(\sigma(f))=0$.

\textbf{Case 2:} $\pl\in\Places-\{\infty\}$.
By Proposition \ref{prp-asgnNN0} and Proposition \ref{prp-topol}, we can assume that $(\pi,V)$ is an irreducible smooth representation of $N$. The strategy of the proof is to use  the results of \cite{GelfandKazhdan}. 
Choose a compact open subgroup 
$K_0\subseteq U_\pl$ such that 
$V^{K_0}\neq \{0\}$, 
and fix a sequence \[
K_0\subseteq K_1\subseteq \cdots \subseteq K_n\subseteq\cdots
\] of compact open subgroups 
of $U_\pl$
such that 
$U_\pl=\bigcup_{n=0}^\infty K_n$.
Let $\cH_{K_0}$ denote the convolution algebra of $K_0$-bi-invariant compactly supported complex-valued functions on $U_\pl$, and let $\cH_{K_0}^{K_n}$ denote the subalgebra of $\cH_{K_0}$ which consists of functions whose support lies in $K_n$.
By
\cite[Prop. 4]{GelfandKazhdan},
the algebra $\cH_{K_0}^{K_n}$ is isomorphic to the commutant of the image of the group
algebra of $K_n$ in the induced representation $\mathrm{Ind}_{K_0}^{K_n}1$, and therefore it is a finite dimensional semisimple associative algebra.
Since $\cH_{K_0}\subseteq\sS(U_\pl)$,
 for every smooth representation $\sigma$ of $U_\pl$ and every $\phi\in\cH_{K_0}$ we define $\sigma(\phi)$ as in \eqref{eqGAL}.

According to \cite[Thm 4]{Moore}, the group $U_\pl$ is CCR, and therefore every point in $\widehat{U}_\pl$ is closed \cite[Sec. 7.2]{Folland}.  Now let $\widehat{U}_\pl^{K_0}$ denote the subset of $\widehat{U}_\pl$ consisting of irreducible representations $(\sigma,W)$ such that $W^{K_0}\neq \{0\}$. We equip 
$\widehat{U}_\pl^{K_0}$ with the topology induced by
 the Fell topology of $\widehat{U}_\pl$. 
Then $S\cap \widehat{U}_\pl^{K_0}$ and $\{\pi\}$ are closed subsets of $\widehat{U}_\pl^{K_0}$, and therefore by 
 \cite[Thm 6]{GelfandKazhdan} and
\cite[Prop. 18]{GelfandKazhdan} there exists an $n\in \N$ such that for every $(\sigma,W)\in S\cap \widehat{U}_\pl^{K_0}$, the 
$\cH_{K_0}^{K_n}$-modules $W^{K_0}$ and $V^{K_0}$ are disjoint.  By Artin--Wedderburn theory, 
\begin{equation}
\label{HKniso}
\cH_{K_0}^{K_n}\cong \mathrm M_{d_1\times d_1}(\C)\times\cdots\times \mathrm M_{d_m\times d_m}(\C)
\end{equation}
for some integers $d_1,\ldots,d_m\geq 1$, where 
$\mathrm M_{d\times d}(\C)$ is the associative algebra of $d\times d$ matrices with complex entries. The irreducible modules of $\cH_{K_0}^{K_n}$
are the standard modules $\C^{d_i}$, $1\leq i\leq m$, of the ideals $\mathrm M_{d_i\times d_i}(\C)$. From 
\eqref{HKniso} and 
disjointness of $V^{K_0}$ and $W^{K_0}$ it follows that there exists an 
idempotent $\psi\in \cH_{K_0}^{K_n}$ such that $\pi(\psi) V^{K_0}\neq \{0\}$, whereas $\sigma(\psi)W^{K_0}=\{0\}$ for every $(\sigma,W)\in S\cap \widehat{U}_\pl^{K_0}$. 
It follows that $\pi(\psi)\neq 0$, whereas \[
\sigma(\psi)W=\sigma(\psi * \psi)W=\sigma(\psi)^2W\subseteq\sigma(\psi)W^{K_0}=\{0\}.
\]
But also when $W^{K_0}=\{0\}$, we have 
$\sigma(\phi)W\in W^{K_0}=\{0\}$
for every $\phi\in \cH_{K_0}$, and in particular
$\sigma(\psi)=0$.
\end{proof}

\section{Kirillov theory for $U_\A$}
\label{KirillovA}

As shown in \cite{Moore}, the group $U_\A$ is \emph{not} of Type I because it is non-abelian. However, 
it is shown in 
 \cite[Thm 11]{Moore} 
that the decomposition of the representation of
$U_\A$ by right translation on $L^2(U_\Q\bls U_\A)$ can be described by means of Kirillov theory. We now recall  Moore's result from  \cite[Thm 11]{Moore}. Let $\g u_\Q$ denote the Lie algebra of $U_\Q$, and let $\g u_\Q^*$ denote the dual of $\g u_\Q$. 
Fix a place $\pl\in\Places$. Every $\mu\in \g u_\Q^*$ can be extended in a unique way to a linear functional $\mu_\pl\in\g u_\pl^*:=\Lie(U_\pl)$. Let $\rho_{\mu_\pl}$ denote the irreducible unitary representation of $U_\pl$ that corresponds to the coadjoint orbit associated to $\mu_\pl$.
Now let $\left(\sfR',L^2(U_\Q\bls U_\A)\right)$ denote the representation of $U_\A$ on $L^2(U_\Q\bls U_\A)$ by right translation.
It is shown in \cite[Thm 11]{Moore} that  
$\left(\sfR',L^2(U_\Q\bls U_\A)\right)$ decomposes as a multiplicity-free direct sum of unitary representations 
\begin{equation}
\label{rhomRTP}
\rho_\mu:=\otimes_{\pl\in\Places}^{}
\rho_{\mu_\nu}\text{ for all }\mu\in\g u_\Q^*.
\end{equation}
In the next section we will need the following lemma.

\begin{lem}
\label{orbitdim}
Let $\mu\in \g u_\Q^*$, and
for every place $\pl\in\Places$
 let $\mathcal O_{\mu_\pl}\subseteq \g u_\pl^*$ 
denote the $U_\pl$-orbit of $\mu_\pl$, where 
$\mu_\pl\in\g u_\pl^*$ is the canonical extension of 
$\mu$. Then 
$\mathcal O_{\mu_\pl}^{}$
is an analytic $\Q_\pl$-submanifold of $\g u_\pl$, and 
$\dim(\mathcal O_{\mu_\pl}^{})$
is independent of the place 
$\pl$. \end{lem}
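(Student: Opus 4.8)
The plan is to compute $\dim(\mathcal O_{\mu_\pl})$ algebraically so that the answer manifestly depends only on data that is insensitive to the place $\pl$. First I would invoke Lemma \ref{dimcoadj}: since $\rho_{\mu_\pl}$ is the irreducible unitary representation attached to the coadjoint orbit of $\mu_\pl$, and since by \eqref{rhomRTP} together with Theorem \ref{mainsalduke} the representation $\rho_{\mu_\pl}$ has a well-defined $\pl$-rank $d = d(\rho_{\mu_\pl})$, it would suffice to show that $d$ does not depend on $\pl$; then $\dim(\mathcal O_{\mu_\pl}) = \dim(U_{1,\pl}) + \cdots + \dim(U_{d,\pl}) - d$, and each $\dim(U_{n,\pl})$ equals $\dim \bfU_n$, a purely algebraic quantity independent of $\pl$. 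The analytic manifold assertion is then immediate from Lemma \ref{dimcoadj} as well (or directly from the argument in its proof, applied to $\bfU$ acting on $\g u$ by the coadjoint action).

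The substantive point is therefore the place-independence of the rank of $\rho_{\mu_\pl}$, and here I would argue via the orbit method directly rather than through the representation. Let $\bfU$ act on the affine space $\g u$ by the coadjoint action, all defined over $\Q$. Extend $\mu \in \g u_\Q^*$ to $\mu_{\oline\Q} \in \g u_{\oline\Q}^*$; by the proof of Lemma \ref{dimcoadj} (using \cite[Lem. 7.1]{Moore}, \cite[Prop. 4.10]{BorelLAG}) the orbit $\mathcal O_{\mu_\pl}$ is exactly $\oline{\mathcal O}_{\mu_{\oline\Q}} \cap \g u_\pl^*$, the $\Q_\pl$-points of the Zariski-closed $\Q$-subvariety $\oline{\mathcal O}_\mu := \oline{\mathcal O}_{\mu_{\oline\Q}}$ of $\g u$. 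Since $\oline{\mathcal O}_\mu$ is a $\Q$-variety whose $\Q_\pl$-points are nonempty for every $\pl$ (they contain $\mu$), its dimension as a $\Q_\pl$-analytic manifold equals its dimension as a $\Q$-variety — this is standard: a smooth $\Q$-variety of dimension $k$ with nonempty $\Q_\pl$-points has $\Q_\pl$-points forming a $\Q_\pl$-analytic manifold of dimension $k$, cf. \cite[Sec. II.3.2]{Serre} and the proof of Lemma \ref{dimcoadj}. Here $\oline{\mathcal O}_\mu$, being a single orbit of the connected group $\bfU$ through a rational point, is smooth and geometrically irreducible, so its $\Q_\pl$-points (being Zariski-dense once nonempty, as $\bfU$ is unipotent hence $\Q_\pl$-split, so $U_\pl$ acts with orbit $\mathcal O_{\mu_\pl}$ Zariski-dense in $\oline{\mathcal O}_\mu$) have $\Q_\pl$-analytic dimension equal to $\dim_{\oline\Q} \oline{\mathcal O}_\mu$, independent of $\pl$.

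I expect the main obstacle to be the bookkeeping needed to identify $\mathcal O_{\mu_\pl}$ with the $\Q_\pl$-points of a single fixed $\Q$-variety and to justify that its $\Q_\pl$-analytic dimension equals the geometric dimension uniformly in $\pl$ — in particular checking that $\mathcal O_{\mu_\pl}$ is Zariski-dense in $\oline{\mathcal O}_\mu$ for every $\pl$ (which uses that $\bfU$ is a unipotent group acting with a single rational orbit, so by \cite[Lem. 7.1]{Moore} no $\Q_\pl$-points of the orbit closure are lost). A slightly different route that avoids the representation-theoretic detour entirely is to prove $\dim \mathcal O_{\mu_\pl} = \dim \bfU - \dim \bfU^{\mu}$, where $\bfU^\mu$ is the stabilizer of $\mu$ for the coadjoint action — equivalently the common rank of the antisymmetric bilinear form $B_\mu(X,Y) = \mu([X,Y])$ on $\g u$ — and to observe that this rank is computed by the vanishing of minors of a matrix with $\Q$-entries, hence is the same over every $\Q_\pl$; this is the cleanest argument and I would present it as the primary one, reserving the orbit-method phrasing above to connect back to Lemma \ref{dimcoadj} and to the analytic-manifold claim. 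Either way, the coincidence with the formula $\dim(U_{1,\pl}) + \cdots + \dim(U_{d,\pl}) - d$ then forces $d$ to be place-independent as a byproduct, which is what is really wanted for the next section.
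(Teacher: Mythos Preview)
Your ``primary'' argument at the very end --- computing $\dim \mathcal O_{\mu_\pl} = \dim \bfU - \dim \bfU^\mu$ where $\bfU^\mu$ is the $\Q$-defined stabilizer of $\mu$ --- is correct and is exactly what the paper does. The paper observes that the stabilizer $\bfS = \bfU^\mu$ is a $\Q$-algebraic subgroup of $\bfU$, that its $\Q_\pl$-points $S_\pl$ form an analytic $\Q_\pl$-manifold of dimension $\dim\bfS$, and then reads off $\dim(\mathcal O_{\mu_\pl}) = \dim\bfU - \dim\bfS$ from the orbit--stabilizer relation (via \cite[II.4.5]{Serre}); the analytic-submanifold claim is taken from the proof of Lemma~\ref{dimcoadj}, as you also suggest. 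Your variant phrasing via the rank of the bilinear form $B_\mu(X,Y)=\mu([X,Y])$ on $\g u_\Q$ is the Lie-algebra version of the same computation. The $\Q$-variety argument in your middle paragraph is also valid but is more work than necessary.

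The opening detour through Lemma~\ref{dimcoadj} and Theorem~\ref{mainsalduke}, however, should be dropped entirely: it is not merely circuitous but wrong in two places. First, Theorem~\ref{mainsalduke} is a statement about irreducible unitary representations of $G_\pl$, not of $U_\pl$; the representation $\rho_{\mu_\pl}$ lives on $U_\pl$ and has no ``$\pl$-rank'' in the sense of Definition~\ref{dfnnurkk}. Second, Lemma~\ref{dimcoadj} gives its dimension formula only for \emph{rankable} representations of $U_\pl$ (Definition~\ref{DfnU(d)}), and an arbitrary $\mu\in\g u_\Q^*$ need not produce one --- the unitary dual $\widehat U_\pl$ is much larger than $\bigcup_d \widehat U_\pl(d)$. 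Indeed, the lemma is invoked in Step~1 of the proof of Theorem~\ref{THMMaIn} precisely to transfer membership in the sets $\widehat U_\pl[d]$ (defined purely by an orbit-dimension bound and containing many non-rankable representations) from one place to another, so the statement must cover arbitrary $\mu$. Your closing remark that place-independence of some rank $d$ is ``what is really wanted for the next section'' therefore misreads how the lemma is used; strip out the rank narrative and lead with the stabilizer argument.
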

\begin{proof}
The action of $\bfU$ on $\g u^*$ is algebraic and defined over $\Q$. Since $\mu\in\g u_\Q^*$, the stabilizer of $\mu$ is an algebraic group $\bfS\subseteq\bfU$ that is defined over $\Q$.
Fix $\pl\in\Places$. From 
the proof of Lemma \ref{dimcoadj}
 it follows that 
$\mathcal O_{\mu_\pl}^{}$ is an analytic 
$\Q_\pl$-submanifold of $\g u_\pl^*$. Let 
$S_\pl$ be the stabilizer of $\mu_\pl$ in $U_\pl$. 
Thus $S_\pl$  is the set of $\Q_\pl$-points of $\bfS$, 
and hence it is an analytic $\Q_\pl$-manifold  of dimension 
$\dim(\bfS)$ (see \cite[Sec. 3.1]{PlatonovRapinchuk}). On the other hand,
by \cite[Sec. II.4.5]{Serre} we have 
$\dim(S_\pl)+\dim(\mathcal O_{\mu_\pl})=
\dim(U_\pl)=\dim(\bfU)$. Consequently, 
$\dim(\mathcal O_{\mu_\pl})=\dim(\bfU)-\dim(\bfS)$ is independent of $\pl\in\Places$.
\end{proof}

\section{Rank for global representations}
Recall the definition of $r:=r(\bfG)$ from 
Section \ref{therankparabolic}.
For every $\pl\in\Places$ and $0\leq d\leq r(\bfG)$, let
$\widehat{U}_{\pl}[d]\subseteq\widehat{U}_{\pl}$ denote the set consisting  of irreducible 
unitary representations that correspond to coadjoint orbits of dimension at most $\dim(U_{1,\pl})+\cdots+\dim(U_{d,\pl})-d$. 
Note that $\widehat{U}_\pl(d)\subseteq \widehat{U}_\pl[d]$, where $\widehat{U}_\pl(d)$
is defined in Definition \ref{DfnU(d)}.
\begin{lem}
$\widehat{U}_{\pl}[d]$ is a closed subset of $\widehat{U}_{\pl}$ for every $0\leq d\leq r(\bfG)$ and every $\pl\in\Places$.
\end{lem}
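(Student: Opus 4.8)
The plan is to show that the complement $\widehat{U}_\pl \setminus \widehat{U}_\pl[d]$ is open. By Corollary \ref{cor-uoverAdu}, the unitary dual $\widehat{U}_\pl$ with the Fell topology is homeomorphic to the orbit space $\g u_\pl^*/\Ad^*(U_\pl)$, so it is equivalent to prove that the set $\mathcal{C}_d$ of points $\mu\in\g u_\pl^*$ whose $\Ad^*(U_\pl)$-orbit has dimension at most $N_d:=\dim(U_{1,\pl})+\cdots+\dim(U_{d,\pl})-d$ is a closed subset of $\g u_\pl^*$, because the quotient map $q:\g u_\pl^*\to\g u_\pl^*/\Ad^*(U_\pl)$ is continuous, open, and saturated (so a subset downstairs is closed iff its preimage is closed). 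Thus the heart of the matter is the upper semicontinuity of the function $\mu\mapsto\dim(\mathcal O_{\mu})$ on $\g u_\pl^*$, where $\mathcal O_\mu$ is the coadjoint $U_\pl$-orbit.

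First I would reduce to a purely algebraic statement. As in the proof of Lemma \ref{dimcoadj}, for each $\mu\in\g u_\pl^*$ the orbit $\mathcal O_\mu$ is the set of $\Q_\pl$-points of the Zariski-closed orbit $\oline{\mathcal O}_\mu\subseteq\g u^*$ under the unipotent algebraic group $\bfU$, and (as in Lemma \ref{orbitdim}) $\dim(\mathcal O_\mu)=\dim(\bfU)-\dim(\bfS_\mu)$, where $\bfS_\mu\subseteq\bfU$ is the stabilizer of $\mu$. So the claim becomes: the function $\mu\mapsto\dim(\bfS_\mu)$ is upper semicontinuous on $\g u_\pl^*$ in the $\Q_\pl$-analytic topology. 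Now the stabilizer dimension $\dim\bfS_\mu$ equals $\dim\g u-\mathrm{rank}\,B_\mu$, where $B_\mu$ is the skew-symmetric bilinear form $B_\mu(X,Y)=\mu([X,Y])$ on $\g u$ (the Kirillov form); indeed $\g s_\mu=\mathrm{rad}\,B_\mu$. The entries of $B_\mu$ in a fixed basis are linear (hence polynomial, hence continuous) functions of $\mu$, so the rank of $B_\mu$ is a lower semicontinuous function of $\mu$ — the vanishing of all $(k+1)\times(k+1)$ minors is a closed condition. Therefore $\{\mu:\dim\bfS_\mu\geq\dim\g u-k\}=\{\mu:\mathrm{rank}\,B_\mu\leq k\}$ is closed, which is exactly the upper semicontinuity we need, and $\mathcal C_d=\{\mu:\mathrm{rank}\,B_\mu\leq \dim\g u-N_d\}$ is closed.

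Two small points need care, and constitute the only mild obstacles. First, one must check that the set $\mathcal C_d$ is $\Ad^*(U_\pl)$-saturated so that it descends to a closed set in the quotient — but this is immediate, since $\dim(\mathcal O_\mu)$ is constant on each orbit. Second, one should confirm that the identification of $\widehat U_\pl[d]$ with $q(\mathcal C_d)$ is the correct one: by definition $\widehat U_\pl[d]$ consists of classes corresponding to coadjoint orbits of dimension at most $N_d$, and under the orbit-method bijection of Corollary \ref{cor-uoverAdu} these are precisely the image $q(\mathcal C_d)$. Since $q$ is an open continuous surjection and $\mathcal C_d=q^{-1}(q(\mathcal C_d))$ is closed, $q(\mathcal C_d)=\widehat U_\pl[d]$ is closed in $\widehat U_\pl$. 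I do not expect any genuine difficulty here; the argument is the standard semicontinuity-of-orbit-dimension fact transported through the orbit method, valid uniformly for $\pl=\infty$ and $\pl$ non-archimedean since in both cases minors of $B_\mu$ are continuous in the relevant topology.
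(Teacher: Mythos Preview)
Your argument is correct and follows essentially the same route as the paper: both reduce via Corollary~\ref{cor-uoverAdu} to the lower semicontinuity of the rank of a matrix depending linearly on $\mu$, the paper using the differential $\dd h_\lambda(\yek)$ of the orbit map and you using the Kirillov form $B_\mu$, which is the same linear map. One harmless slip: since $\dim\mathcal O_\mu=\mathrm{rank}\,B_\mu$, the correct description is $\mathcal C_d=\{\mu:\mathrm{rank}\,B_\mu\le N_d\}$, not $\le\dim\g u-N_d$; this does not affect the proof, as you have shown $\{\mathrm{rank}\,B_\mu\le k\}$ is closed for every $k$.
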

\begin{proof}
Let $U_\pl(\lambda)$ denote the stabilizer of   $\lambda\in\g u_\pl^*$ in $U_\pl$.
By Corollary \ref{cor-uoverAdu}, $\widehat{U}_\pl$ is homeomorphic to 
 $\g u_\pl^*/\Ad^*(U_\pl)$.
Therefore it suffices to prove that for every $n\in\N$,  the set
\[
T_n:=\left\{\lambda\in\g u_\pl^* \ :\ 
\dim \left(U_\pl(\lambda)\right)<n
\right\}
\] is an open subset of $\g u_\pl^*$. 
For every $\lambda\in \g u_\pl$,  
let $h_\lambda:U_\pl\to \g u_\pl^*$ be defined by $h_\lambda(g):=\Ad^*(g)\lambda$. Then $\dim(U_\pl(\lambda))=\dim\g u_\pl-\mathrm{rank}(\dd h_\lambda(\yek))$, where $\dd h_\lambda$ is the differential of $h_\lambda$. Since the map $\lambda\mapsto\mathrm{rank}(\dd h_\lambda(\yek))$ is a lower semi-continuous function of  $\lambda$, 
 the complement of $T_n$ is open.\end{proof}
For every $\pl\in\Places$, set
\[
J_{d,\pl}:=\left\{
\phi\in\sS(U_{\pl})\ :\ \sigma(\phi)=0\text{ for every }
\sigma\in\widehat{U}_{\pl}[d]\right\}
.\]
\begin{lem}
\label{lemrkdsii}
Fix $\pl\in\Places$. Let $(\sigma,\ccH)$ be  a unitary representation of $U_\pl$, and let \[
\sigma=\int^\oplus_{\widehat{U}_\pl}n_\tau\tau d\mu(\tau)
\] be the direct integral decomposition of $\sigma$. For $1\leq d\leq r(\bfG)$, 
the following statements are equivalent.
\begin{itemize}
\item[\rm(i)] $\supp(\mu)\subseteq \widehat{U}_\pl[d]$.
\item[\rm(ii)] $\sigma(\phi)=0$ for every 
$\phi\in J_{d,\pl}$.
\end{itemize}
\end{lem}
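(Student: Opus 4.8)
The plan is to prove the two implications separately, using the direct integral decomposition of $\sigma$ together with the measure-theoretic behavior of the operators $\sigma(\phi)$. First I would record the key fact that $\sigma(\phi) = \int^\oplus_{\widehat U_\pl} n_\tau\, \tau(\phi)\, d\mu(\tau)$ as a decomposable operator, so that $\sigma(\phi) = 0$ if and only if $\tau(\phi) = 0$ for $\mu$-almost every $\tau$ (here I would cite the standard properties of direct integrals of representations, e.g.\ from Dixmier's book on $C^*$-algebras). This reduces both directions to understanding which $\tau$ satisfy $\tau(\phi) = 0$.

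For the implication (i) $\Rightarrow$ (ii): assume $\supp(\mu) \subseteq \widehat U_\pl[d]$. If $\phi \in J_{d,\pl}$, then by definition $\tau(\phi) = 0$ for every $\tau \in \widehat U_\pl[d]$, hence for every $\tau \in \supp(\mu)$, hence for $\mu$-almost every $\tau$. By the decomposability remark above, $\sigma(\phi) = 0$. This direction is essentially immediate once the direct-integral formalism is in place.

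For the implication (ii) $\Rightarrow$ (i): I would argue by contrapositive. Suppose $\supp(\mu) \not\subseteq \widehat U_\pl[d]$, so there is a point $\pi_0 \in \supp(\mu)$ with $\pi_0 \notin \widehat U_\pl[d]$. Since $\widehat U_\pl[d]$ is closed (by the preceding lemma), Proposition \ref{vinfSU} applies with $S = \widehat U_\pl[d]$: there exists $\psi \in \sS(U_\pl)$ with $\pi_0(\psi) \neq 0$ and $\sigma'(\psi) = 0$ for every $\sigma' \in \widehat U_\pl[d]$; in particular $\psi \in J_{d,\pl}$. I then need to show $\sigma(\psi) \neq 0$, i.e.\ that the set $\{\tau : \tau(\psi) \neq 0\}$ has positive $\mu$-measure. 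Because $\pi_0 \in \supp(\mu)$, every Fell-open neighborhood of $\pi_0$ has positive $\mu$-measure, so it suffices to find such a neighborhood on which $\tau(\psi) \neq 0$. This follows from (lower semi-)continuity of $\tau \mapsto \|\tau(\psi)\|$ in the Fell topology: the function $\tau \mapsto \|\tau(\psi)\|$ is lower semicontinuous (a standard fact about the Fell topology, since $\|\tau(\psi)\|$ is a supremum of continuous functions involving matrix coefficients), so $\{\tau : \|\tau(\psi)\| > \tfrac12\|\pi_0(\psi)\|\}$ is a Fell-open neighborhood of $\pi_0$. On this neighborhood $\tau(\psi) \neq 0$, and it has positive $\mu$-measure, so $\sigma(\psi) \neq 0$, contradicting (ii). Hence $\supp(\mu) \subseteq \widehat U_\pl[d]$.

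The main obstacle I expect is the second direction, specifically the step linking $\pi_0 \in \supp(\mu)$ to the positivity of $\mu$ on the set where $\tau(\psi) \neq 0$; this is where Proposition \ref{vinfSU} does the real work (it is precisely the reason that section was needed), together with the lower semicontinuity of $\tau \mapsto \|\tau(\psi)\|$ and the fact that $U_\pl$ is CCR so that the Fell topology coincides with the hull-kernel topology, making ``support of $\mu$'' well-behaved. In the non-archimedean case one should double-check that $\sS(U_\pl) = C_c^\infty(U_\pl)$ interacts correctly with the smooth-dual description, but Propositions \ref{prp-asgnNN0} and \ref{prp-topol} already reconcile the two pictures.
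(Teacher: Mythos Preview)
Your proposal is correct and follows essentially the same route as the paper: direct integral formalism for (i)$\Rightarrow$(ii), and for (ii)$\Rightarrow$(i) the contrapositive via Proposition~\ref{vinfSU} together with the fact that $\{\tau:\tau(\psi)\neq 0\}$ is a Fell-open neighborhood of the chosen point $\pi_0\in\supp(\mu)$. The only difference is cosmetic: where you invoke lower semicontinuity of $\tau\mapsto\|\tau(\psi)\|$ as a standard black box (Dixmier, \emph{$C^*$-algebras}, Prop.~3.3.2), the paper carries out this step by hand, choosing a unit vector $v$ with $|\langle\tau_\circ(\psi)v,v\rangle|>\tfrac34$ and then estimating matrix coefficients directly against the Fell basic open sets to force $\langle\tau(\psi)w,w\rangle\neq 0$ nearby. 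Your formulation is slightly cleaner in that you pick $\pi_0\in\supp(\mu)\setminus\widehat U_\pl[d]$ at the outset, whereas the paper first passes to $\mu(\widehat U_\pl\setminus\widehat U_\pl[d])>0$ and then uses second countability to locate such a point---but these amount to the same thing.
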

\begin{proof}
(i)$\RA$(ii): 
From  \cite[Sec. 14.9.2]{WallachII} and the definition of $J_{\pl,d}$ it follows that
\[
\sigma(\phi)=
\int^\oplus_{\widehat{U}_\pl[d]}
n_\tau\tau(\phi) d\mu(\tau)=0.
\]
(ii)$\RA$(i): We prove the contrapositive, that is, if (i) is false then (ii) is false. Suppose that 
the support of $\mu$  does not lie inside $\widehat{U}_\pl[d]$. Since $\widehat{U}_\pl[d]$ is a closed subset of $\widehat{U}_\pl$, we obtain $\mu\big(\widehat{U}_\pl-\widehat{U}_\pl[d]\big)> 0$. 
It follows that there exists some $(\tau_{\circ},\ccH_{\tau_\circ}) \in\widehat{U}_\pl-\widehat{U}_\pl[d]$ such that $\mu(\cU)> 0$ for every open neighborhood $\cU$ of $(\tau_{\circ},\ccH_{\tau_\circ}) \in\widehat{U}_\pl-\widehat{U}_\pl[d]$  (because otherwise, 
since $\widehat{U}_\pl$ is second countable \cite[Prop. 3.3.4]{Dixmierbook},
we will find a covering of 
$\widehat{U}_\pl-\widehat{U}_\pl[d]$ by countably many null open sets). By Proposition \ref{vinfSU}, there exists an element $\psi\in J_{d,\pl}$ such that $\tau_\circ(\psi)\neq 0$.
Set  $\psi^\dagger(n):=\overline{\psi(n^{-1})}$ for $n\in U_\pl$.
Without loss of generality we can assume that $\psi=\psi^\dagger$, because otherwise we can replace $\psi$ by either $i(\psi-\psi^\dagger)$ or $\psi+\psi^\dagger$.
After scaling $\psi$ by a real number, we can also assume that $\|\tau_\circ(\psi)\|=1$.
Since $\tau_\circ(\psi)$ is self-adjoint, 
we can choose
$v\in \ccH_{\tau_\circ}$ such that
 $\|v\|=1$ and $\left|\lag\tau_\circ(\psi)v,v\rag\right|>\frac{3}{4}$.

Fix $\eps>0$ such that 
$\eps\left(2+\eps+\|\psi\|_{L^1(U_\pl)}\right)<\frac{3}{4}$, and choose a compact subset $\Omega\subseteq U_\pl$ such that $\yek\in \Omega$ and 
$\|\psi-\chi_\Omega^{}\psi\|_{L^1(U_\pl)}<\eps$,  where $\yek$ denotes the neutral element of $U_\pl$ and $\chi_\Omega^{}$ denotes the 
characteristic function of $\Omega$. Set $\cU:=\cU(\tau_\circ,\Omega,\eps;v;v)$, defined as in \eqref{basopuni}. For every  $(\tau,\ccH_\tau)\in \cU$, there exists a vector $w\in \ccH_\tau$ such that 
 \[\sup\left\{|\lag\tau_\circ(n)v,v\rag-\lag\tau(n)w,w\rag |\,:\,n\in\Omega\right\}<\eps.
\]
In particular, setting $g=\yek$ we obtain $\|w\|^2<1+\eps$.
Next set
\[a:=|\lag\tau_\circ(\psi)v,v\rag -\lag\tau_\circ(\chi_\Omega^{}\psi)v,v\rag|
\text{ and }b:=|\lag\tau(\psi)w,w\rag -\lag\tau(\chi_\Omega^{}\psi)w,w\rag|
.\]
By the choice of $\Omega$, we have 
$a\leq \|\psi-\chi_\Omega^{}\psi\|_{L^1(U_\pl)}< \eps$ and $b\leq (1+\eps)\|\psi-\chi_\Omega^{}\psi\|_{L^1(U_\pl)}< \eps(1+\eps)$.
Now set 
$c:=|\lag\tau_\circ(\chi_\Omega\psi)v,v\rag-
\lag\tau(\chi_\Omega\psi)w,w\rag|$.
Then
\[
c\leq \int_{\Omega}|\psi(n)|\cdot
|\lag\tau_\circ(n)v,v\rag-\lag \tau(n)w,w\rag|dn
< \eps\|\psi\|_{L^1(U_\pl)}
\]
By the above estimates and the triangle inequality we obtain
$|\lag\tau_\circ(\psi)v,v\rag-\lag\tau(\psi)w,w\rag|\leq a+b+c<\frac{3}{4}$.
Since $|\lag \tau_\circ(\psi)v,v\rag|>\frac{3}{4}$, we obtain 
$\lag\tau(\psi)w,w\rag\neq 0$, and in particular $\tau(\psi)\neq 0$. Finally, since  
$\sigma(\psi)=
\int_{\widehat{U}_\nu}^\oplus
n_\tau\tau(\psi) d\mu(\tau)$, we obtain $\sigma(\psi)\neq 0$.
\end{proof}

\begin{lem}
\label{lem:contPhi1Dc}
Fix a place $\pl\in\Places$. Let 
$f\in L^2(G_\Q\bls G_\A)$ be of moderate growth, and let 
$\psi\in\sS(U_\pl)$. 
Then the map
\[
G_\Q\bls G_\A\to\C\ ,\ x\mapsto\int_{U_\nu}\psi(n)f(xn)dn
\]
is continuous and in $L^2(G_\Q\bls G_\A)$.
\end{lem}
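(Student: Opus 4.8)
The plan is to establish continuity first, then square-integrability, treating the archimedean and non-archimedean contributions to $\A$ separately since the hypothesis on $\psi$ has a different flavor at each type of place. Write $\Phi(x):=\int_{U_\pl}\psi(n)f(xn)\,dn$. Since $\psi\in\sS(U_\pl)$ is absolutely integrable (recall $\sS(N)\subseteq L^1(N)$ by the remark after Definition \ref{dfnSAN}, and for $\pl$ finite $\sS(U_\pl)=C_c^\infty(U_\pl)$), and since $f$ is continuous and of moderate growth, the integral converges absolutely. For continuity, I would fix $x_0$ and a compact neighborhood; at a non-archimedean place the support of $\psi$ is compact, so the integral is over a fixed compact set and continuity follows from uniform continuity of $f$ on $x_0\cdot(\text{compact})\cdot\supp(\psi)$. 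At $\pl=\infty$, the Schwartz decay of $\psi$ together with the moderate-growth bound \eqref{mfboundeq} (applied with the $G_\R$-component of $xn$, using that translating $x$ in a compact set only changes the relevant constant $c_{x,f}$ in a locally bounded way) lets me dominate the integrand uniformly for $x$ near $x_0$ by a fixed $L^1$ function of $n$, so dominated convergence gives continuity. One must be a little careful that $xn$ for $n\in U_\infty$ moves only in the archimedean factor, so $\|(xn)_\R\|\le \|x_\R\|\cdot\|n\|$ and the polynomial-times-Schwartz integrand is integrable.

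For square-integrability, the natural approach is to compare $\Phi$ with the action of the Gårding-type operator coming from $\psi$ on $L^2(U_\Q\bls U_\A)$, exploiting that $U_\Q\bls U_\A$ is compact (stated in Section \ref{therankparabolic}/Section on smooth forms) and that $U_\Q\bls U_\A \into G_\Q\bls G_\A$ is a closed embedding. Concretely, I would show $\int_{G_\Q\bls G_\A}|\Phi(x)|^2\,dx$ is finite by first bounding $|\Phi(x)|$ pointwise: $|\Phi(x)|\le \int_{U_\pl}|\psi(n)|\,|f(xn)|\,dn$. Using the moderate-growth bound, $|f(xn)|\le c_{x,f}\|n\|^{m_f}$ at the archimedean place and $|f(xn)|\le \sup_{n\in\supp\psi}|f(xn)|$ at a finite place; in either case I get $|\Phi(x)|\le C_f \cdot g(x)$ where $g$ is built from $f$ by a bounded averaging operation. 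The cleanest route is: the operator $T_\psi: L^2(G_\Q\bls G_\A)\to (\text{functions on }G_\Q\bls G_\A)$, $T_\psi h(x)=\int_{U_\pl}\psi(n)h(xn)\,dn$, agrees up to the identification of Remark \ref{dfnabus} with $\sfR'$-convolution by the push-forward of $\psi\,dn$ along $U_\pl\into U_\A$ composed with restriction; since $U_\pl$-right-translation is unitary on $L^2(G_\Q\bls G_\A)$ and $\psi\in L^1(U_\pl)$, the operator $h\mapsto \int_{U_\pl}\psi(n)\sfR(n)h\,dn$ is a bounded operator on $L^2(G_\Q\bls G_\A)$ with norm $\le\|\psi\|_{L^1(U_\pl)}$. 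Therefore $\Phi=T_\psi f$, being a priori a continuous representative of the $L^2$-class $\int_{U_\pl}\psi(n)\sfR(n)f\,dn$, lies in $L^2(G_\Q\bls G_\A)$, and the identification of the continuous function with the $L^2$-class is justified by Remark \ref{dfnabus} (two left $G_\Q$-invariant representatives agreeing a.e.\ agree off a null set, and the continuous one is the distinguished representative).

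The main obstacle I expect is the bookkeeping needed to identify the pointwise-defined integral $\Phi(x)$ with the Hilbert-space-valued (Bochner) integral $\int_{U_\pl}\psi(n)\sfR(n)f\,dn$, i.e.\ showing that the continuous function produced by integrating the fixed moderate-growth representative of $f$ coincides almost everywhere with the $L^2$-function obtained abstractly. This is where one uses that $f$ is genuinely a function of moderate growth (not merely an $L^2$-class), that the $L^1$-integral over $U_\pl$ commutes with evaluation at a.e.\ point (Fubini against the Haar measure on $U_\Q\bls U_\A$, which is legitimate by the moderate-growth domination established in the continuity step), and Remark \ref{dfnabus} to pin down the representative. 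At a finite place this is essentially immediate since $\psi$ has compact support; at $\pl=\infty$ the Schwartz decay of $\psi$ against the moderate-growth bound is exactly what makes the Fubini step and the Bochner-integral identification go through. Once that identification is in place, boundedness of $L^1$-convolution on $L^2$ finishes the square-integrability claim with essentially no further work.
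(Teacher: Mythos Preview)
Your proposal is correct and follows essentially the same path as the paper. For continuity you do exactly what the paper does: at a finite place use compact support of $\psi$ and uniform continuity of $f$; at $\pl=\infty$ use Schwartz decay against the moderate-growth bound (the paper carries this out by an explicit $\eps$-splitting into a compact piece and a tail rather than invoking dominated convergence, but the content is identical, and your remark that $c_{x,f}$ must be controlled locally in $x$ is precisely what the paper's decomposition $y=xz_{\mathrm{fin}}z_\infty$ in its Step~3 is for).

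For $L^2$-membership there is a minor difference in packaging. The paper proves $\Phi_2(x):=\int|\psi(n)f(xn)|\,dn$ lies in $L^2$ directly: test against an arbitrary $h\in L^2(G_\Q\bls G_\A)$, apply Fubini and Cauchy--Schwarz to bound the pairing by $\|\psi\|_{L^1}\|f\|_{L^2}\|h\|_{L^2}$, and invoke Riesz. You instead observe that the Bochner integral $\int_{U_\pl}\psi(n)\sfR(n)f\,dn$ is automatically in $L^2$ (bounded operator, norm $\le\|\psi\|_{L^1}$) and then identify it with the pointwise integral $\Phi$. That identification, which you correctly flag as the main bookkeeping point, requires exactly the same Fubini/Cauchy--Schwarz estimate the paper uses, and is in fact the content of the paper's \emph{next} lemma (Lemma~\ref{lemfbn}). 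So your route effectively merges the two lemmas; the paper keeps them separate, first establishing $\Phi\in L^2$ unconditionally and only afterward matching it with $\sfR_{U_\pl}(\psi)f$. Either organization works; the underlying computation is the same. (One small slip: your parenthetical ``Fubini against the Haar measure on $U_\Q\bls U_\A$'' should read $G_\Q\bls G_\A$.)
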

\begin{proof}
Set $\Phi_1(x):=\int_{U_\nu}\psi(n)f(xn)dn$ and $\Phi_2(x):=\int_{U_\nu}
|\psi(n)f(xn)|dn$ for every $x\in G_\Q\bls G_\A$. 
Recall that by
Definition \ref{dfnAuFo}, a moderate growth element of $L^2(G_\Q\bls G_\A)$ is  a smooth map $G_\Q\bls G_\A\to \C$.

\textbf{Step 1.} 
We show that $\Phi_2(x)<\infty$ for every $x\in G_\Q\bls G_\A$.  If $\pl\in\Places-\{\infty\}$, then the statement is obvious since $\psi$ is compactly supported. Next assume that  $\pl=\infty$, 
and fix $x\in G_\Q\bls G_\A$. 
Let $\|\cdot\|^{}_{\g u_\infty}$ be a norm on $\g u_\infty$. By \eqref{mfboundeq} we can assume that
there exist $c_1,m_1>0$
such that 
\[
|f(x\exp(y))|\leq c_1(\|y\|^{}_{\g u_\infty}+1)^{m_1}
\text{ for every }y\in\g u_\infty.
\]
Since $\psi\in\sS(U_\infty)$, there exists a constant
$c_2>0$ such that \[
\psi(\exp(y))\leq c_2(\|y\|^{}_{\g u_\infty}+1)^{-m_1-2\dim(\g u_\infty)}\text{ for every }y\in\g u_\infty.
\]
 Since the Haar measure on $U_\pl$ is the pushforward of the Lebesgue measure of $\g u_\pl:=\Lie(U_\pl)$ via the exponential map, we obtain $\Phi_2(x)<\infty$.

\textbf{Step 2.} From Step 1 it follows that the integral defining $\Phi_1(x)$ is convergent for every $x\in G_\Q\bls G_\A$. In this step  we assume that 
$\pl\in\Places-\{\infty\}$, and we prove that $\Phi_1$ is a continuous map. (The case $\nu=\infty$ will be addressed in Step 3 below.) 
Fix $x\in G_\Q\bls G_\A$. Our goal is to prove continuity of $\Phi_1$ at $x$. 
Set $\Omega:=\supp(\psi)$. For every $y\in G_\Q\bls G_\A$,
\begin{equation}
\label{eqPhi1Phi1y}
\Phi_1(x)-\Phi_1(y)=\int_\Omega 
\psi(n)(f(xn)-f(yn))dn.
\end{equation}
Fix any $\eps>0$. By continuity of the map
$\Omega\times (G_\Q\bls G_\A)\to \C$ defined as $(n,x)\mapsto f(xn)$, and by compactness of $\Omega$, there exists an open neighborhood $\cU\subseteq G_\Q\bls G_\A$ of $x$
such that 
\begin{equation}
\label{eqfxn-fyn|}
|f(xn)-f(yn)|<\frac{\eps}{\|\psi\|_{L^1(U_\pl)}}\ \text{ for every }y\in\cU\text{ and every }n\in\Omega.
\end{equation} 
From \eqref{eqfxn-fyn|}
and \eqref{eqPhi1Phi1y} it follows that 
$|\Phi_1(x)-\Phi_1(y)|<\eps$ for every $y\in\cU$.
Since $\eps>0$ is arbitrary, the latter inequality proves continuity of $\Phi_1$ at an arbitrary point
$x\in G_\Q\bls G_\A$.

\textbf{Step 3.} In this step we assume that  $\nu=\infty$, and we prove that  the map $\Phi_1$ is continuous.
Fix $x\in G_\Q\bls G_\A$. Our goal is to prove continuity of $\Phi_1$ at $x$. 
Suppose that $\Omega\subseteq U_\infty$ is an arbitrary   relatively compact open set. Then for every $y\in G_\Q\bls G_\A$, 
\begin{equation}
\label{v=infPHi1}
\Phi_1(x)-\Phi_1(y)=
\int_\Omega \psi(n)(f(xn)-f(yn))dn
+\int_{U_\infty-\Omega}\psi(n)(f(xn)-f(yn))dn.
\end{equation}
Choose any $\eps>0$. 
Our goal is to obtain upper estimates for the above integrals on $\Omega$ and $U_\infty- \Omega$ for a suitably chosen  $\Omega$.

Every
$y\in G_\Q\bls G_\A$ can be written as
 $y=xz_\mathrm{fin}z_\infty$ where 
$z_\mathrm{fin}\in G_{\A_\mathrm{fin}}$ and $z_\infty\in G_\infty$. If $y$ is chosen sufficiently close to $x$, then $z_\mathrm{fin}$ and $z_\infty$ will be sufficiently close to the neutral elements of 
$G_{\A_\mathrm{fin}}$ and $G_{\R}$, respectively.
Therefore smoothness of $f:G_\Q\bls G_\A\to \C$ and the growth bound \eqref{mfboundeq}   imply that there exists a sufficiently small neighborhood $\cU_1\subseteq G_\Q\bls G_\A$ of $x$ such that for every $y\in\cU_1$ and every $n\in U_\infty$,
\begin{align}
\label{Calfxn-fyn}
\notag
|f(xn)-f(yn)|&\leq |f(xn)|+|f(xz_\mathrm{fin}z_\infty n)|
=
|f(xn)|+|f(xz_\infty nz_\mathrm{fin})|
\\
&=
|f(xn)|+|f(xz_\infty n)|\leq 
c_{x,f}\|n\|^{m_f}+c_{x,f}\|z_\infty n\|
^{m_f}\leq c_3\|n\|^{m_f},
\end{align}
where $c_3>0$ is a constant. Fix a norm $\|\cdot\|^{}_{\g u_\infty}$ on $\g u_\infty$. 
From \eqref{Calfxn-fyn}
it follows that there exist $c_4,m_4>0$ such that
\begin{equation}
\label{xexpuus}
|f(x\exp(u))-f(y\exp(u))|\leq c_4
(\|u\|^{}_{\g u_\infty}+1)^{m_4}
\text{ for every }u\in \g u_\infty
\text{ and every }
y\in\cU_1.
\end{equation}
Since $\psi\in\sS(U_\pl)$, we can choose an $\Omega$ suitably large such that 
\begin{equation}
\label{xexpuus1}
|\psi(\exp(u))|\leq \frac{\eps}{c_4}(\|u\|^{}_{\g u_\infty}+1)^{-m_4-2\dim (\g u_\infty)}\text{ for every }u\in \g u_\infty\text{ such that }\exp(u)\in U_\infty-\Omega.
\end{equation}
Set 
$c_5:=\int_{U_\pl}(\|u\|_{\g u_\infty}+1)^{-2\dim(\g u_\infty)}du$. 
Since the Haar measure of $U_\infty$ is the pushforward of the Lebesgue measure of $\g u_\infty$, the latter integral is convergent. From \eqref{xexpuus} and \eqref{xexpuus1} it follows that   
\begin{equation}
\label{c5EQ}
\left|\int_{U_\infty-\Omega} 
\psi(n)(f(xn)-f(yn))dn\right|<c_5\eps
\,\text{ for every }y\in \cU_1.
\end{equation}

With an argument similar to the case $\pl\neq \infty$ in Step 2, 
we can show that there exists a
sufficiently small neighborhood $\cU_2\subseteq G_\Q\bls G_\A$ of $x$ (which depends on $\Omega$) such that for every $y\in \cU_2$ we have 
\begin{equation}
\label{c55EQ}
\left|\int_\Omega \psi(n)(f(xn)-f(yn))dn\right|<\eps.
\end{equation}
From \eqref{c5EQ} and \eqref{c55EQ} it follows that 
for every $y\in\cU_1\cap\cU_2$ we have
\[
\left|\int_{U_\infty} \psi(n)(f(xn)-f(yn))dn\right|<(c_5+1)\eps.
\]
The latter inequality implies continuity of $\Phi_1$ at $x$.

\textbf{Step 4.} We prove that $\Phi_1\in L^2(G_\Q\bls G_\A)$.
It is enough  to prove that 
$\Phi_2\in L^2(G_\Q\bls G_\A)$. 
Measurability of $\Phi_2$ follows from Fubini's Theorem. 
Next fix any $h\in L^2(G_\Q\bls G_\A)$. By Fubini's Theorem and the Cauchy--Schwarz inequality,
\begin{align*}
\int_{G_\Q\bls G_\A}|\Phi_2(x) h(x)|dx
&=
\int_{U_\pl}\left(\int_{G_\Q\bls G_\A}|\psi(n)f(xn)h(x)|dx\right)dn\\
&
\leq\int_{U_\pl}|\psi(n)|\cdot
\|f\|^{}_{L^2(G_\Q\bls G_\A)}
\cdot 
\|h\|^{}_{L^2(G_\Q\bls G_\A)}dn\\
&\leq
\|\psi\|_{L^1(U_\pl)}
\cdot
\|f\|^{}_{L^2(G_\Q\bls G_\A)}
\cdot 
\|h\|^{}_{L^2(G_\Q\bls G_\A)}dn.
\end{align*}
Thus the map 
$
h\mapsto \int_{L^2(G_\Q\bls G_\A)}\Phi_2(x)h(x)dx
$
is a bounded linear functional on $L^2(G_\Q\bls G_\A)$, hence by the Riesz representation theorem we obtain $\Phi_2\in L^2(G_\Q\bls G_\A)$.\end{proof}

Lemma \ref{lemfbn} below probably follows from standard results in the literature. 
We include a complete proof because we did not find a suitable reference. The tricky point is to use Fubini's Theorem carefully to justify that one 
can change the order of certain integrals.

Before stating  Lemma \ref{lemfbn}, we remind the reader that  by 
Definition \ref{dfnAuFo}, a moderate growth element  $f\in L^2(G_\Q\bls G_\A)$ is assumed to be a smooth map $f:G_\Q\bls G_\A\to \C$. In particular, the restriction 
$f\big|_{U_\Q\bls U_\A}$ is well-defined and continuous. 
Recall that $\sS(U_\pl)$ denotes the Schwartz space of $U_\pl$.
\begin{lem}
\label{lemfbn}
Fix a place $\pl\in\Places$. Let 
$f\in L^2(G_\Q\bls G_\A)$ be  of moderate growth.
Set $\sfR_{U_\pl}:=\sfR\big|_{U_\pl}$, where $\sfR$ denotes the representation of $G_\A$ on $L^2(G_\Q\bls G_\A)$ by right translation,  and let $\psi\in\sS(U_\pl)$. Then 
\begin{itemize}
\item[\rm (i)]
$
\left(
\sfR_{U_\pl}(\psi)f\right)(x)=
\int_{U_\nu}\psi(n)f(xn)dn
$
for almost every 
$
x\in G_\Q\bls G_\A.
$
\item[\rm (ii)] Let
$\sfR'$ denote the representation of $U_\A$ on $L^2(U_\Q\bls U_\A)$ by right translation, and set $\sfR'_{U_\pl}:=\sfR'\big|_{U_\pl}$. Then 
$
\left(
\sfR'_{U_\pl}(\psi)f\big|_{U_\Q\bls U_\A}\right)(x)=
\int_{U_\nu}\psi(n)f(xn)dn
$
for almost every 
$
x\in U_\Q\bls U_\A.
$

\end{itemize} 
\end{lem}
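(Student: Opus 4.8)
The plan is to prove both parts by the same device. By definition \eqref{eqGAL}, $\sfR_{U_\pl}(\psi)f$ (respectively $\sfR'_{U_\pl}(\psi)(f\big|_{U_\Q\bls U_\A})$) is a Bochner integral $\int_{U_\pl}\psi(n)\sfR(n)f\,dn$ in $L^2(G_\Q\bls G_\A)$ (respectively in $L^2(U_\Q\bls U_\A)$), whereas by Lemma \ref{lem:contPhi1Dc} the proposed right-hand side $\Phi_1(x):=\int_{U_\pl}\psi(n)f(xn)\,dn$ is an honest continuous element of $L^2(G_\Q\bls G_\A)$. To identify the two $L^2$-classes it suffices to show they pair identically against every $h$ in the relevant $L^2$ space. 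Since a Bochner integral commutes with the bounded functional $g\mapsto\lag g,h\rag$, one gets $\lag\sfR_{U_\pl}(\psi)f,h\rag=\int_{U_\pl}\psi(n)\lag\sfR(n)f,h\rag\,dn=\int_{U_\pl}\int_{G_\Q\bls G_\A}\psi(n)f(xn)\overline{h(x)}\,dx\,dn$, while $\lag\Phi_1,h\rag=\int_{G_\Q\bls G_\A}\int_{U_\pl}\psi(n)f(xn)\overline{h(x)}\,dn\,dx$, so everything reduces to interchanging the two integrals.

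For part (i) this interchange is legitimate by Fubini's theorem: the integrand $(n,x)\mapsto\psi(n)f(xn)\overline{h(x)}$ is measurable on $U_\pl\times(G_\Q\bls G_\A)$, since $f$ is continuous (it is a smooth function $G_\Q\bls G_\A\to\C$), the action map is continuous, and $h$ is measurable; and its absolute value has finite double integral, which is precisely the estimate already carried out in Step 4 of the proof of Lemma \ref{lem:contPhi1Dc}, namely $\int_{U_\pl}\int_{G_\Q\bls G_\A}|\psi(n)f(xn)h(x)|\,dx\,dn\le\|\psi\|_{L^1(U_\pl)}\|f\|_{L^2(G_\Q\bls G_\A)}\|h\|_{L^2(G_\Q\bls G_\A)}<\infty$. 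Hence $\lag\sfR_{U_\pl}(\psi)f,h\rag=\lag\Phi_1,h\rag$ for all $h\in L^2(G_\Q\bls G_\A)$, which gives (i).

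For part (ii) I would run the identical argument over the compact homogeneous space $U_\Q\bls U_\A$. First note that for $n\in U_\pl\subseteq U_\A$ and $x\in U_\Q\bls U_\A$ one has $xn\in U_\Q\bls U_\A$, so the restriction $g:=f\big|_{U_\Q\bls U_\A}$ — which is continuous, hence bounded, hence in $L^2(U_\Q\bls U_\A)$ by the remark preceding the lemma — satisfies $(\sfR'(n)g)(x)=g(xn)=f(xn)$, and by \eqref{eqGAL} $\sfR'_{U_\pl}(\psi)g=\int_{U_\pl}\psi(n)\sfR'(n)g\,dn$. Pairing against an arbitrary $h\in L^2(U_\Q\bls U_\A)$ and unwinding exactly as above reduces (ii) to interchanging $\int_{U_\pl}$ and $\int_{U_\Q\bls U_\A}$ in $\int_{U_\pl}\int_{U_\Q\bls U_\A}\psi(n)f(xn)\overline{h(x)}\,dx\,dn$; here the Fubini hypothesis is immediate, since $f$ is bounded on the compact set $U_\Q\bls U_\A$, $\int_{U_\pl}|\psi(n)|\,dn<\infty$ because $\psi\in\sS(U_\pl)$, and $\int_{U_\Q\bls U_\A}|h|<\infty$. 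This yields (ii).

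The one genuinely delicate point — the one flagged in the remark before the lemma — will be the measurability and integrability bookkeeping needed to invoke Fubini in part (i). One cannot simply "restrict" the $L^2$-class $\sfR_{U_\pl}(\psi)f$ to a null subset such as $U_\Q\bls U_\A$, so it is essential first to identify that class with the explicit continuous function $\Phi_1$ (via Lemma \ref{lem:contPhi1Dc} together with part (i)); and the absolute-convergence bound must be the sharper estimate of Step 4 of that lemma, rather than a naive $L^1$-estimate on $f$, which need not be available for a moderate-growth function.
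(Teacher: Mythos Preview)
Your proof is correct and follows essentially the same approach as the paper: define $\Phi_1$, pair both sides against an arbitrary $h$ in the relevant $L^2$ space, and invoke Fubini (justified via the absolute-integrability estimate from Lemma~\ref{lem:contPhi1Dc}) to interchange the integrals. The paper only writes out part~(i) and remarks that~(ii) is similar and easier because $U_\Q\bls U_\A$ is compact, exactly as you argue.
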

\begin{proof}
We will only give the proof of (i). The proof of (ii) is similar and indeed somewhat easier, since $U_\Q\bls U_\A$ is compact.
Set $\Phi_1(x):=\int_{U_\nu}\psi(n)f(xn)dn$ 
for every $x\in G_\Q\bls G_\A$. 

Fix any
$h\in L^2(G_\Q\bls G_\A)$. 
By Lemma \ref{lem:contPhi1Dc}
 we are allowed to use Fubini's Theorem to write
\begin{align*}
\lag\sfR_{U_\pl}(\psi)f,h\rag
&=
\int_{U_\pl}\psi(n)
\lag\sfR_{U_\pl}(n)f,h\rag
dn=
\int_{U_\pl}
\int_{G_\Q\bls G_\A}
\psi(n)
f(xn)\oline{h(x)}dxdn\\
&=
\int_{G_\Q\bls G_\A}
\int_{U_\pl}
\psi(n)
f(xn)\oline{h(x)}dndx=
\int_{G_\Q\bls G_\A}\Phi_1(x)\overline{h(x)}dx=
\lag \Phi_1,h\rag.
\end{align*}
Since
$h\in L^2(G_\Q\bls G_\A)$ is arbitrary,
from the above calculation it follows that 
$\sfR_{U_\pl}(\psi)f=\Phi_1$ as elements of $L^2(G_\Q\bls G_\A)$.
\end{proof}

Next we state and prove our main theorem (from the introduction). Recall that 
$(\sfR,L^2(G_\Q\bls G_\A))$ denotes the unitary representation of $G_\A$ on $L^2(G_\Q\bls G_\A)$ by right translation. Furthermore, recall the
definition of $\pl$-rank of an irreducible unitary representation of $G_\pl$ given in Definition 
\ref{dfnnurkk}, where $\pl\in\Places$.

\begin{thm}
\label{THMMaIn}
Let $(\pi,\ccH)$ be an irreducible unitary representation of $G_\A$ which occurs as a subrepresentation of $(\sfR,L^2(G_\Q\bls G_\A))$, and let the $\pi_\pl$, $\pl\in\Places$, denote the local components of $\pi$, as in Remark 
\ref{rmk-loccpnt}.
Then the $\pl$-rank of $\pi_\pl$ is independent of $\pl$.
\end{thm}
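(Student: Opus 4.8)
The plan is to characterise $\mathrm{rk}(\pi_\pl)$ by testing $\pi_\pl$ against the ideals $J_{d,\pl}\subseteq\sS(U_\pl)$, and then to transfer this test between places by comparing $\pi$ with the Kirillov decomposition of $L^2(U_\Q\bls U_\A)$ from Section~\ref{KirillovA}. The first step is a \emph{local criterion}. Writing $\pi\cong\pi_\pl\otimes\pi'_\pl$ as in Remark~\ref{rmk-loccpnt}, one has $\sfR_{U_\pl}(\phi)\big|_{\ccH}=\pi_\pl(\phi)\otimes\id$ for $\phi\in\sS(U_\pl)$, so the vanishing of the left-hand side is equivalent to $\pi_\pl(\phi)=0$. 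Feeding this into Theorem~\ref{mainsalduke}, Lemma~\ref{dimcoadj} and Lemma~\ref{lemrkdsii} (applied to $\pi_\pl\big|_{U_\pl}$), and using that the thresholds $\dim U_{1,\pl}+\cdots+\dim U_{d,\pl}-d$ are strictly increasing in $d$ (the $U_{i,\pl}$ are Heisenberg groups, of dimension at least three), one gets, for $1\le d\le r(\bfG)$,
\[
\mathrm{rk}(\pi_\pl)\le d\ \Longleftrightarrow\ \sfR_{U_\pl}(\phi)\big|_{\ccH}=0\ \text{ for every }\phi\in J_{d,\pl}.
\]

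Next I would bring in the global side. Let $\ccH^\circ$ be the G\aa rding space of $\pi$; by Remark~\ref{rmkAuFo} its elements are honest smooth functions of moderate growth on $G_\Q\bls G_\A$, so, $U_\Q\bls U_\A$ being compact, restriction defines a $U_\A$-equivariant linear map $\mathrm{res}\colon\ccH^\circ\to L^2(U_\Q\bls U_\A)$. Set $\ccK:=\overline{\mathrm{res}(\ccH^\circ)}$, a closed $U_\A$-invariant subspace; by the decomposition of Section~\ref{KirillovA} and its multiplicity-freeness, $\ccK=\bigoplus_{\mu\in I}\rho_\mu$ for some set $I$ of functionals, with $\rho_\mu=\otimes_\eta\rho_{\mu_\eta}$. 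For $f\in\ccH^\circ$ and $\phi\in\sS(U_\pl)$, Lemmas~\ref{lem:contPhi1Dc}--\ref{lemfbn} identify $\sfR_{U_\pl}(\phi)f$ with the continuous function $\Phi^f(x)=\int_{U_\pl}\phi(n)f(xn)\,dn$ on $G_\Q\bls G_\A$, and $\mathrm{res}\big(\sfR_{U_\pl}(\phi)f\big)=\sfR'_{U_\pl}(\phi)\mathrm{res}(f)$ with $\Phi^f\big|_{U_\Q\bls U_\A}$, as genuine (not merely almost-everywhere) equalities of continuous functions.

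The crux is the \emph{key equivalence}: for every $\phi\in\sS(U_\pl)$,
\[
\sfR_{U_\pl}(\phi)\big|_{\ccH}=0\ \Longleftrightarrow\ \sfR'_{U_\pl}(\phi)\big|_{\ccK}=0.
\]
The implication ``$\Rightarrow$'' is immediate: if $\sfR_{U_\pl}(\phi)f=0$ then $\Phi^f\equiv0$, hence $\sfR'_{U_\pl}(\phi)\mathrm{res}(f)=0$ for all $f\in\ccH^\circ$, and then on $\ccK$ by density. For ``$\Leftarrow$'', fix $f\in\ccH^\circ$ and $\gamma\in G^\pl$. Since the images in $G_\A$ of the factors at distinct places commute (cf.\ Remark~\ref{rmk-loccpnt}), $\gamma$ centralises $U_\pl$, so $\Phi^f(\gamma)=\int_{U_\pl}\phi(n)f(n\gamma)\,dn=\Phi^{\sfR(\gamma)f}(\yek)$; and $\Phi^{\sfR(\gamma)f}\big|_{U_\Q\bls U_\A}=\sfR'_{U_\pl}(\phi)\mathrm{res}\big(\sfR(\gamma)f\big)=0$, because $\mathrm{res}(\sfR(\gamma)f)\in\ccK$ and $\sfR(\gamma)f\in\ccH^\circ$. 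Hence $\Phi^f(\gamma)=0$. So $\Phi^f$, which is left $G_\Q$-invariant, vanishes on $G_\Q G^\pl$; by the weak approximation hypothesis~(iii) this set is dense in $G_\A$, whence $\Phi^f\equiv0$, i.e.\ $\sfR_{U_\pl}(\phi)f=0$; and $\ccH^\circ$ being dense, $\sfR_{U_\pl}(\phi)\big|_{\ccH}=0$.

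Combining the two equivalences with $\ccK\big|_{U_\pl}=\bigoplus_{\mu\in I}\rho_{\mu_\pl}\otimes\id$ and Lemma~\ref{lemrkdsii} applied to the irreducible $\rho_{\mu_\pl}$ (spectral measure a point mass at the orbit $\mathcal O_{\mu_\pl}$), one obtains, for $1\le d\le r(\bfG)$,
\[
\mathrm{rk}(\pi_\pl)\le d\ \Longleftrightarrow\ \dim\mathcal O_{\mu_\pl}\le\dim U_{1,\pl}+\cdots+\dim U_{d,\pl}-d\ \text{ for every }\mu\in I,
\]
with the elementary case $d=0$ recording that $\pi_\pl$ is trivial iff every $\dim\mathcal O_{\mu_\pl}=0$ — a place-independent condition, which together with the displayed equivalence for $d\ge 1$ covers all cases. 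By Lemma~\ref{orbitdim} each $\dim\mathcal O_{\mu_\pl}$ is independent of $\pl$, as are the $\dim U_{i,\pl}=\dim\bfU_i$; hence the right-hand condition, and therefore the set $\{d:\mathrm{rk}(\pi_\pl)\le d\}$, does not depend on $\pl$, and neither does $\mathrm{rk}(\pi_\pl)$. I expect the main obstacle to be the key equivalence: restriction is not a bounded $U_\A$-map $L^2(G_\Q\bls G_\A)\to L^2(U_\Q\bls U_\A)$, so one must work with restrictions of individual G\aa rding vectors, and the only leverage for passing from ``$\sfR_{U_\pl}(\phi)f$ vanishes on the closed orbit $U_\Q\bls U_\A$'' to ``$\sfR_{U_\pl}(\phi)f\equiv0$'' is the interplay of the commuting $G^\pl$-action with weak approximation; the auxiliary issue of representing these operators by continuous functions is exactly what Lemmas~\ref{lem:contPhi1Dc}--\ref{lemfbn} dispose of.
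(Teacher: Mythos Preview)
Your proof is correct and follows essentially the same approach as the paper's: restrict G\aa rding vectors to $U_\Q\bls U_\A$, use Moore's decomposition and Lemma~\ref{orbitdim} to make the orbit-dimension condition place-independent, and use the commutation of $U_\pl$ with $G^\pl$ together with weak approximation to pass back from $\ccK$ to $\ccH$. The only differences are packaging---you organise the argument as a single biconditional ``key equivalence'' $\sfR_{U_\pl}(\phi)\big|_{\ccH}=0\Leftrightarrow\sfR'_{U_\pl}(\phi)\big|_{\ccK}=0$ and then observe the $\ccK$-side is place-independent, whereas the paper fixes a place of minimal rank and pushes the bound to every other place---and one citation: your appeal to Remark~\ref{rmk-loccpnt} for the commutation $[\gamma,U_\pl]=1$ is a bit quick, since for covering groups the commutator of $G_\pl$ and $G^\pl$ a priori lands only in $F$; the paper (Step~2) secures the needed commutation of $U_\pl$ with $G^\pl$ directly from the uniqueness of the splitting section over $\bfU(\Q_\pl)$.
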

\begin{proof}
Let  $\ccH^\circ$ denote the G\aa rding space of $(\pi,\ccH)$ defined in Remark \ref{rmkAuFo}. For every  $\pl\in\Places$, let 
$d_\pl$ denote the
$\pl$-rank of $\pi_\pl$. If $d_\pl=r(\bfG)$ for every $\pl\in\Places$, then there is nothing to prove. Next assume that $d_\pl<r(\bfG)$ for some 
$\pl\in\Places$, and choose $\pl\in\Places$ 
such that $d_\pl$ has the smallest possible value.
It suffices to prove that $d_{\pl_1}\leq d_\pl$ for every other  $\pl_1\in\Places$.

 Set $\sfR_{U_\pl}^{}:=\sfR\big|_{U_\pl}$. Lemma \ref{lemrkdsii} implies that $\sfR_{U_\pl}(\phi)f=0$ for every $\phi\in J_{d_\nu,\nu}$ and every $f\in \ccH$. 
Consider the vector space $W$ of complex-valued functions on $U_\Q\bls U_\A$ defined as
\[
W:=\left\{f\big|_{U_\Q\bls U_\A}\ :\ f\in \ccH^\circ\right\}.
\] Note that 
by Remark \ref{rmkAuFo}, 
elements of $\ccH^\circ$ are represented by  continuous maps 
$G_\Q\bls G_\A\to\C$,  and 
therefore their restriction to $U_\Q\bls U_\A$ is well-defined. 
Since  $U_\Q\bls U_\A$ is compact, elements of $W$ are bounded functions on $U_\Q\bls U_\A$, and in particular $W\subseteq L^2(U_\Q\bls U_\A)$.  
Let $\ccK\subseteq L^2(U_\Q\bls U_\A)$ denote the closure of $W$ inside  $L^2(U_\Q\bls U_\A)$.
Since $\ccH^\circ$ is $G_\A$-invariant, the space $W$ is $U_\A$-invariant. From $U_\A$-invariance of $W$ it follows that $\ccK$ is also $U_\A$-invariant. Consequently, we obtain a unitary representation 
$(\sigma,\ccK)$ 
of $U_\A$ on $\ccK$ obtained from the restriction of $\sfR'$ (see Lemma \ref{lemfbn}(ii)).   

\textbf{Step 1.} 
Let 
$\sfR'_{U_{\pl_1}}$ be as in
Lemma \ref{lemfbn}(ii).
In this step we prove that
\begin{equation}
\label{sfR'U}
\sfR'_{U_{\pl_1}}(\phi_1)w=0
\text{ for every }
\phi_1\in J_{d_\nu,\nu_1}
\text{ and  }w\in \ccK.
\end{equation}
Fix  
$\phi\in J_{d_\nu,\nu}$.
For any $f\in \ccH^\circ$,
Lemma \ref{lemfbn}(i) and 
continuity of the map 
$x\mapsto \int_{U_\nu}\phi(n)f(xn)dn$ (see Lemma \ref{lem:contPhi1Dc}) imply that
$
\int_{U_\nu}\phi(n)f(xn)dn=0
$ for every 
$x\in G_\Q\bls G_\A$. 
Thus from Lemma \ref{lemfbn}(ii) it follows that 
\begin{equation}
\label{eqsfR'}
\sfR'_{U_\pl}(\phi)w=0
\text{ for every }w\in W.
\end{equation} 
From Moore's result mentioned in Section \ref{KirillovA} it follows that $\sigma=\bigoplus_{\mu\in S}\rho_\mu$, where 
$S\subseteq \g u_\Q^*$ and  the representations 
$\rho_\mu=\otimes_{\pl\in\Places}^{}\rho_{\mu_\pl}$ are defined in 
\eqref{rhomRTP}. Now fix $\mu\in S$. Since $\rho_\mu\big|_{U_\pl}$ is a direct sum of countably many copies of $\rho_{\mu_\pl}$, 
from \eqref{eqsfR'} and Lemma \ref{lemrkdsii} it follows that $\rho_{\mu_\pl}\in \widehat{U}_\pl[d_\pl]$. 
Consequently, by Lemma \ref{orbitdim} we obtain  
\[
\rho_{\mu_{\pl_1}}\in\widehat{U}_{\pl_1}[d_\pl]
\text{ for every }\pl_1
\in\Places
.\]
Therefore  Lemma \ref{lemrkdsii} 
implies \eqref{sfR'U}.

\textbf{Step 2.}
Let
$G^{\pl_1}$ be 
defined as in Section \eqref{Gupv}.
For every 
$g\in G^{\nu_1}$, the map 
\[
\bfU(\Q_{\nu_1})\to G_\A\ ,\ n\mapsto g\mathsf{s}_{\nu_1}(n)g^{-1}
\]
is a splitting section. From the uniqueness of this section (see Section \ref{Sec-nurknk})
it follows that $ng=gn$ for every $n\in U_{\pl_1}$.

\textbf{Step 3.} By the weak approximation property, $\bfG(\Q)\bfG^{\pl_1}$ is dense in $\bfG(\A)$. 
It follows that $G_\Q G^{\pl_1}F$ is a dense subset of $G_\A$, where $F\sseq G_\A$ denotes the kernel of the central extension \eqref{exactseqA}.

\textbf{Step  4.} 
In this step we prove that
\begin{equation}
\label{vanRphi}
\sfR_{U_{\pl_1}}(\phi_1)f=0\text{ for every  }f\in\ccH^\circ\text{ and }\phi_1\in J_{d_\pl,\pl_1}.
\end{equation}

Recall that by Remark \ref{dfnabus}
and Remark \ref{rmkAuFo}, we can represent every element of $\ccH^\circ$ by a unique continuous map $G_\A\to\C$. 
Furthermore, Lemma \ref{lem:contPhi1Dc} and 
$G_\A$-invariance of $\ccH^\circ$ imply that the map $x\mapsto \int_{U_{\nu_1}}\phi_1(n)f(xng)dn$ 
is continuous for every $f\in\ccH^\circ$ and $g\in G_\A$.
Thus by Lemma \ref{lemfbn}(ii) and Step 1
  we obtain
\begin{equation}
\label{eqphfxng}
\int_{U_{\pl_1}}\phi_1(n)f(xng)dn=0\text{ for every  }\,
\phi_1\in J_{d_\nu,\nu_1},\ 
f\in\ccH^\circ,\ g\in G_\A,\text{ and }x\in U_\Q\bls U_\A.
\end{equation}
Setting $x=U_\Q$ (the identity coset in $U_\A$) in  
\eqref{eqphfxng}, and using Step 2, we obtain that
\begin{equation}
\label{eqphfxng1}
\int_{U_{\pl_1}}\phi_1(n)f(gn)dn=0\text{ for every  }\,
\phi_1\in J_{d_\pl,\pl_1},\ 
g\in G^{\pl_1},
\text{ and }
f\in\ccH^\circ.
\end{equation}
Note that in \eqref{eqphfxng1} we consider $f$ as a map $G_\A\to\C$ (see Remark \ref{dfnabus}).

By Lemma \ref{lemfbn}(i),
continuity of the left hand side of 
\eqref{eqphfxng1} as a function of $g$
(see Lemma \ref{lem:contPhi1Dc}), and Step 3, in order to complete the proof of \eqref{vanRphi} it suffices to prove that
the vanishing condition 
\eqref{eqphfxng1}
holds  for every $g\in G_\Q G^{\nu_1}F$.
From left $G_\Q$-invariance of
the left hand side of 
\eqref{eqphfxng1} it follows that 
this vanishing condition also holds for every $g\in G_\Q G^{\pl_1}$. 
Finally, Schur's Lemma implies that for every $z\in F$, the action of $\pi(z)$ 
on $\ccH$ is by a scalar $\gamma(z)\in\C$, 
and therefore for every $g\in G_\Q G^{\pl_1}$,
\[
\int_{U_{\pl_1}}\phi_1(n)f(gzn)dn
=\int_{U_{\pl_1}}\phi_1(n)f(gnz)dn
=\gamma(z)\int_{U_{\pl_1}}\phi_1(n)f(gn)dn=0
.
\]

\textbf{Step 5.} Since 
$\ccH^\circ$ is dense in $\ccH$, the assertion
\eqref{vanRphi} of Step 4 holds for every $f\in\ccH$ as well. Now Lemma \ref{lemrkdsii} and Theorem 
\ref{mainsalduke} imply that   
the ${\pl_1}$-rank of $\pi_{\pl_1}$ is at most $d_\pl$. 
\end{proof}

\begin{rmk}
For several groups $\bfG$ it is known (see \cite{GanSavin}, \cite{SalManus}) that  
for every $\pl\in\Places$, an irreducible unitary representation of $G_{\pl}$ is minimal if and only if its $\nu$-rank is equal to one.
For such $\bfG$, Theorem \ref{THMMaIn} implies that   if at least one local component of an automorphic representation of $G_\A$ is a minimal representation, then all of its local components are minimal.
\end{rmk}

\bibliographystyle{plain}
\bibliography{lowrank}

\end{document}